\definecolor{orange1}{rgb}{1,0.5,0}
\definecolor{orange2}{rgb}{0.8,0.4,0}
\definecolor{orange3}{rgb}{0.6,0.3,0}
\definecolor{vert1}{rgb}{0,1,0}
\definecolor{vert2}{rgb}{0,0.8,0}
\definecolor{vert3}{rgb}{0,0.7,0}
\definecolor{bleu1}{rgb}{0.8,0,1}
\definecolor{bleu2}{rgb}{0.64,0,0.8}
\definecolor{bleu3}{rgb}{0.56,0,0.7}
\definecolor{gris1}{rgb}{0.7,0.7,0.7}
\definecolor{gris2}{rgb}{0.6,0.6,0.6}
\definecolor{gris3}{rgb}{0.55,0.55,0.55}
\definecolor{bleugris}{rgb}{0.48,0.62,0.83}
\numberwithin{equation}{section}
\theoremstyle{plain}
\newtheorem{thm}{Theorem}[section]
\newtheorem{prop}[thm]{Proposition}
\newtheorem{lem}[thm]{Lemma}
\newtheorem{defi}[thm]{Definition}
\theoremstyle{definition}
\theoremstyle{remark}
\newcommand{\R}{\mathbb{R}}
\newcommand\N{{\mathbb N}}
\newcommand\abs[1]{\left\vert#1\right\vert}
\newcommand\norm[1]{\left\Vert#1\right\Vert}
\newcommand\pref[1]{(\ref{#1})}
\let \eps\varepsilon
\DeclareMathOperator{\argmin}{argmin}
\DeclareMathOperator{\argmax}{argmax}
\def\<#1,#2>{\left<#1,#2\right>}
\newcommand\tilw{\widetilde {w}}
\def\PP{{\cal P}}
\newcommand{\cU}{\mathcal{U}}
\newcommand{\EE}{\mathbb{E}}
\newcommand{\Pro}{\mathbb{P}}
\def\ow{\overline{w}}
\def\uw{\underline{w}}
\def\wmu{\widetilde{\mu}}
\def\tmu{\widetilde{\mu}}
\def\tJ{\widetilde{J}}
\def\tpi{\widetilde{\pi}}
\def\tw{\widetilde{w}}
\def\tf{\widetilde{f}}
\def\tR{\widetilde{R}}
\def\tG{\widetilde{G}}
\def\tR{\widetilde{R}}
\def\txi{\widetilde{\xi}}
\def\tR{\widetilde{R}}
\def\tA{\widetilde{A}}
\title{A simple city equilibrium model with an application to teleworking}
\author{Yves Achdou\thanks{Universit\'e de Paris Cit\'e and Sorbonne Universit\'e, CNRS, Laboratoire Jacques-Louis Lions, (LJLL), F- 75006 Paris, France
\texttt{achdou@ljll-univ-paris-diderot.fr}},
\and
Guillaume Carlier\thanks{CEREMADE, Universit\'e Paris
Dauphine, PSL, Pl. de Lattre de Tassigny, 75775 Paris Cedex 16, FRANCE and INRIA-Paris, MOKAPLAN,
\texttt{carlier@ceremade.dauphine.fr}},
\and
Quentin Petit \thanks{CEREMADE, Universit\'e Paris
Dauphine, PSL, and EDF R\&D,
\texttt{quentin.petit@edf.fr}}
\and
Daniela Tonon \thanks{Dipartimento di Matematica "Tullio Levi-Civita", Universit\`a degli Studi di Padova, via Trieste 63, 35121 Padova, Italy.
\texttt{tonon@math.unipd.it}}
}
\begin{document}

\maketitle

\begin{abstract}
We propose a simple semi-discrete spatial  model where rents, wages and the density of  population in a city can be deduced from free-mobility and equilibrium conditions on the labour and residential housing markets. We prove existence and (under stronger assumptions) uniqueness of the equilibrium. We extend our model to the case where teleworking is introduced.  We present numerical simulations which shed light on the effect of teleworking on the structure of the city at equilibrium. 
\end{abstract}

\textbf{Keywords:} spatial equilibria, teleworking.

\smallskip

\textbf{MS Classification: 90B85, 49Q22.}

\section{Introduction}\label{sec-intro}

Whether the internal structure of cities can be determined endogeneously by equilibrium (e.g. on the labour and residential markets) conditions is a central and generally complex issue in urban economics, see in particular  Fujita and Ogawa \cite{FUJITAOGAWA}, Lucas and Rossi-Hansberg \cite{LRH2002} and the references therein. Due to the fact that equilibria are in general non-unique, difficult to compute explicitly and may depend quite crucially on some modelling assumption on the city itself (linear, circular, finite etc...), it is in general difficult to anticipate how changes in some parameters  influence the city structure and how rents and spatial distributions of agents evolve under such changes. With the framework proposed in \cite{CE2007} and  relying  on optimal transportation, existence of equilibria for arbitrary city shapes can be proved, but the resulting model is far too complex to yield meaningful predictions on how spatial equilibria evolve under shocks on the technology used by the firms or on any other fundamentals in the city economy.

\smallskip

The tremendous development of telecommuting and the possibility of working from home (anticipated and analyzed in the seminal paper of Gaspar and Glaeser \cite{GASPAR} in 1998), exacerbated since the beginning of the COVID-19 pandemic in 2020, has seriously challenged the traditional paradigm where the commuting cost to business districts is a key ingredient in workers' housing choice which shapes the rents and residential distribution patterns. This has of course stimulated an active stream of recent research. Delventhal et al. \cite{DELVENTHAL} (also see \cite{DelventhalParkhomenko_WFH}) developed a model to analyze the impact of the increase of teleworking in the Los Angeles metropolitan area; among their main findings:  \emph{residents move to the periphery} and \emph{average real estate prices fall, with declines in core locations and increases in the periphery}. In the recent equilibrium model of Behrens et al. \cite{Behrens}, existence and uniqueness of equilibria is established as well as the (mixed) effect of working from home on efficiency.

\smallskip

In the present paper, our goal is to present a simple spatial equilibrium model which is tractable enough to compute numerically equilibria in one or two-dimensional domains and analyze the effect of the increase of working from home. Our model is inspired by discrete choice models and entropic optimal transport (see  \cite{CJP2009}, \cite{CM2018} and the book \cite{Galichonbook} for an overview of these methods in economics). We consider that firms are located at finitely many fixed places and that workers are identical and distributed continuously throughout the city. Wages determine labour supply for the different firms through some simple Gibbs distributions, equilibrium on the housing market gives the residential distribution in terms of workers' revenues. Equilibrium wages are then obtained by clearing the labour market so that our model determines wages, rents and the spatial distribution of (possibly working from home) workers.  On the theoretical side, we provide a simple existence argument and identify conditions which guarantee uniqueness.  As we will show, our model is well suited to capture the introduction of teleworking. On the applied side, we can numerically compute equilibria, perform some comparative statics  as working from home increases; our findings are in line with those of \cite{DELVENTHAL}.  Note however that our model differs from those of \cite{DELVENTHAL} and \cite{Behrens}  in two ways: on the one hand, it is less general because we consider identical workers (that may work from home or on site) and do not distinguish between skilled (allowed to work from home) and unskilled (having to commute) workers; but, on the other hand, it allows for general commuting costs, city shapes and a continuum of housing locations.

\smallskip

The paper is organized as follows. The model is introduced and equilibria are defined in section \ref{sec-model}. In section \ref{sec-exist}, we establish existence of equilibria and also prove a uniqueness result under additional assumptions. Section \ref{sec-tele} is devoted to a modification of the model which takes teleworking into account. Numerical simulations are presented in section \ref{sec-num}. Some proofs are gathered in the appendix.

\section{The model}\label{sec-model}

\subsection{Setting and assumptions}\label{assum}

The city is modeled as a compact subset $X$ of $\R^d$ with positive $d$-dimensional Lebesgue measure. In this city, there are firms and workers. Our aim is to describe a tractable spatial equilibrium model for  the labour and housing markets. Our model is of semi-discrete type: the locations of firms are fixed and discrete whereas workers occupy the whole city according to a certain (absolutely continuous) distribution $\mu$ which is determined by equilibrium conditions. Workers have to commute and will choose to work at locations for which their revenue (i.e. wage net of commuting cost) is maximal. We also assume that the total size of the population is fixed which (up to normalization) amounts to saying that $\mu$ belongs to $\PP(X)$, the set of Borel probability measures on $X$. The fact that the  spatial distribution of firms is discrete and that the distribution of workers is absolutely continuous implies that firms do not consume land whereas workers do. Therefore, in this semi-discrete setting, there is no competition for land use between firms and workers; this simplifies a lot the equilibirum condition  (e.g. compared to the models of \cite{LRH2002} or \cite{CE2007}) on the land market which is reduced to the residential housing market. We now detail the specifications of the model.

\smallskip

{\textbf{Firms}.} There are finitely many firms which are indexed by $i=1, \ldots, N$, firm $i$ is located at a point of $X$ denoted $y_i$. Firms locations $\{y_1, \ldots, y_N\}$ are fixed. Firm $i$ hiring a quantity of labour $l_i$ produces a quantity $f_i(l_i)$. We shall always assume that the production functions $f_i$: $\R_+ \to \R_+$ are increasing, strictly concave, continuous on $\R_+$ and $C^1$ on $(0,+\infty)$ with
\begin{equation}\label{assumptfi}
f_i(0)=0, \; \lim_{l\to +\infty} f_i(l)=+\infty,   \; \lim_{l\to 0^+} f_i'(l)=+\infty, \; \lim_{l\to +\infty} f_i'(l)=0. 
\end{equation}
If the wage paid by firm $i$ is denoted by $w\in \R_+$, its employment level is obtained by maximizing profit:
\begin{equation}
\pi_i(w):=\sup_{l\geq 0} \{ f_i(l)-w l\}.
\end{equation} 
Obviously, $\pi_i(0)=+\infty$, $\pi_i$ is strictly convex, nonnegative and nonincreasing and thanks to \pref{assumptfi}  and the envelope theorem, it is also $C^1$ on $(0, +\infty)$, and for every $w>0$, firm $i$'s labour demand is given by
\begin{equation}\label{Lienv}
L_i(w):=\argmax_{l\geq 0} \{ f_i(l)-w l\}=-\pi'_i(w).
\end{equation}
Moreover, one readily deduces from  \pref{assumptfi}  that
\begin{equation}\label{prilimit}
\pi_i(0)=\lim_{w\to 0^+} \pi_i(w)=+\infty, \;  \;  \lim_{w\to +\infty} \pi_i(w)=0,
\end{equation}
and
\begin{equation}\label{Llimit}
\lim_{w\to 0^+} L_i(w)=+\infty, \; \lim_{w\to +\infty} L_i(w)=0. 
\end{equation}

\smallskip

{\textbf{Workers preferences, deducing density from revenues}.} Workers spend their revenue in consumption $C$ on the one hand and renting their house on the other hand; the surface they rent is denoted $S$. Workers are assumed to all have the same Cobb-Douglas utility:
\begin{equation}\label{cobbdu}
(C,S)\in \R_+^2 \mapsto C^{\theta} S^{1-\theta}
\end{equation}
for some $\theta\in [0,1)$. Normalizing the price of the consumption good to $1$, workers with revenue $R>0$, facing a rent $Q>0$, solve
\begin{equation}
	\cU_\theta(R,Q)=\sup\left\{C^\theta S^{1-\theta} \; : \; C + Q S\le R, C\ge 0, S\ge 0\right\},
\end{equation}
the solution being
\begin{equation}\label{formCS}
C= \theta R,\quad\quad\hbox{and}\quad\quad S = (1-\theta)\frac{R}{Q}.
\end{equation}
One gets the closed-form expression
\begin{equation}\label{formcU}
	\cU_\theta(R,Q) = \theta^\theta(1-\theta)^{1-\theta} \frac{R}{Q^{1-\theta}} \mbox{ if $\theta\in (0,1)$ and } \cU_0(R,Q)=\frac{R}{Q}.
\end{equation}

Revenues, surface consumption and rents are functions of the workers' residential location $x\in X$, which we now denote $R(x)$, $S(x)$ and $Q(x)$. If we denote by $\mu(x)$ the density of the workers' residential distribution and if we normalize 
the supply of land to be uniform\footnote{Of course, it is also possible to have a location dependent supply for land and to replace \pref{surface1} by $\mu(x) S(x)= \alpha(x)$ for a given nonnegative function $\alpha$.}, $\mu$ and $S$ are simply related by
\begin{equation}\label{surface1}
\mu(x) S(x)=1, \; \forall x\in X.
\end{equation}
In particular the support of $\mu$ is the whole city $X$. Since workers are identical and free to choose where to live, at equilibrium, utility should be constant which, in view of \pref{formcU}, yields
\[Q \propto R^{\frac{1}{1-\theta}},\]
where we write $a\propto b$ to indicate the functions $a$ and $b$ are proportional. With \pref{formCS} and \pref{surface1}, we deduce
\[\mu \propto \frac{Q}{R} \propto R^{\frac{\theta}{1-\theta}}.\]
Since the total mass of $\mu$ has been fixed to $1$, we deduce an explicit dependence between density and revenue:
\begin{equation}\label{mudeR}
\mu(x)=\frac{ R(x)^{\frac{\theta}{1-\theta}}   } {\int_X R(y)^{\frac{\theta}{1-\theta}} \mbox{d} y   }, \; \forall x\in X.
\end{equation}

\smallskip

{\textbf{Free-mobility of labour, commuting costs, deducing revenues from wages.}} Let us denote  by $w:=(w_1, \ldots, w_N) \in (0,+\infty)^N$ the wages paid to workers hired by firm $i=1, \ldots, N$. If agents living at $x$ choose to work for firm $i$ located at $y_i$, they incur a commuting cost $c_i(x)$ so that their revenue is $w_i-c_i(x)$. We assume that the commuting (or accessibility) costs are continuous functions:
\begin{equation*}
c_i\in C(X), \; \forall i\in \{1, \ldots, N\}. 
\end{equation*}
We finally assume that agents have the possibility to stay at home ($c_0 \equiv 0$) and receive a given exogeneous wage $w_0>0$. In the absence of random shocks on revenues, the revenue of agents located at $x$, is given by
\begin{equation}\label{defr0}
R_0(x,w):=\max_{i=0, \ldots, N} (w_i-c_i(x))
\end{equation}
which is a convex function of wages but may have kinks. For mathematical simplicity, we will start by replacing it by its softmax (smooth) version:
\begin{equation}\label{defrsig}
R_\sigma(x,w):= \sigma \log \Big( \sum_{i=0}^N  e^{\frac{w_i-c_i(x)}{\sigma}}\Big)
\end{equation}
where $\sigma>0$ is a regularization parameter, and we postpone the analysis of the limiting case $\sigma=0$ to paragraph \ref{nonoise}.  Note that $R_\sigma(x,.)$ is smooth and strictly convex and that it is an approximation of $R_0$ when $\sigma$ is small because of the obvious inequality:
\begin{equation}\label{softmaxapprox} 
R_0(x,w) \leq R_\sigma(x,w) \leq \sigma \log(N+1)+ R_0(x, w), \forall (x,w)\in X\times \R^N.
\end{equation} 

The regularized form \pref{defrsig} can also be justified by  considering independent random shocks to the revenue, indeed, one has
\begin{equation}\label{softmaxmax}
R_\sigma(x,w):=\EE \Big( \max_{i=0, \ldots, N} (w_i-c_i(x)+\sigma \eps_i)\Big)
\end{equation}
where $\eps_0, \ldots, \eps_N$ are i.i.d. centered Gumbel random variables (see Appendix A for details). Given the wages $w$, the probability that an agent living at $x$ works for firm $i$, is given by
\[ \Pro( x \mbox{ works at $i$})=\Pro \Big(w_i-c_i(x) + \sigma \eps_i=  \max_{j=0, \ldots, N} (w_j-c_j(x)+\sigma \eps_j)\Big);\]
this probability is given by the Gibbs distribution (see Appendix A):
\begin{equation}\label{probaix}
\Pro( x \mbox{ works at $i$})
=\frac{\partial R_\sigma}{\partial w_i}(x,w)
= \frac{e^\frac{w_i-c_i(x)}{\sigma}}{\sum_{j=0}^{N} e^\frac{w_j-c_j(x)}{\sigma}}.
\end{equation}

Hence the total labour supply for firm $i$ induced by the vector of wages $w$ is
\begin{equation}\label{laboursupply}
\int_X \frac{\partial R_\sigma}{\partial w_i}(x,w) \mu(x) \mbox{d}x=  \int_X \frac{e^\frac{w_i-c_i(x)}{\sigma}}{\sum_{j=0}^{N} e^\frac{w_j-c_j(x)}{\sigma}}  \mu(x) \mbox{d}x. 
\end{equation}

\subsection{Equilibria}

A spatial equilibrium is a configuration that clears both the labour and residential housing markets. It consists of a vector of wages $w=(w_1, \ldots w_N)$ and a probability density $\mu$, such that for each $i$, labour demand given by \pref{Lienv} matches labour supply given by \pref{laboursupply}:
\begin{equation}\label{eqlabour}
\pi'_i(w_i)+ \int_X \frac{\partial R_\sigma}{\partial w_i}(x,w) \mu(x) \mbox{d}x=0, \; \forall i\in \{1, \ldots, N\}.
\end{equation}
But at the same time, by free-mobility of labour, revenues of workers can be deduced from wages by formula \pref{defrsig}, and equilibrium on the housing market requires utility to be constant so that $\mu$ can be deduced from $R(.)=R_\sigma(.,w)$ by formula \pref{mudeR} i.e.
\begin{equation}\label{defmuw}
\mu(x)= \mu_w(x):= \frac{ R_\sigma(x, w)^{\frac{\theta}{1-\theta}}   } {\int_X R_\sigma(y, w)^{\frac{\theta}{1-\theta}} \mbox{d} y   }, \; \forall x\in X.
\end{equation}
This leads to the following definition:

\begin{defi}\label{equidef}
An equilibrium is a vector of wages $(w_1, \ldots, w_N)\in (0,+\infty)^N$ such that the system of $N$ equations \pref{eqlabour} is satisfied for the probability density $\mu=\mu_w$ given by \pref{defmuw}.
\end{defi}

\section{Existence and uniqueness}\label{sec-exist}

\subsection{Existence of equilibria}

To prove existence of equilibria, it will be convenient to observe that for a given $\mu\in \PP(X)$, the system \pref{eqlabour} is the first-order optimality condition equation for the convex minimization problem:
\begin{equation}\label{minijmu}
\inf_{w\in \R_+^N}  J_\mu(w) \mbox{ where } J_\mu(w)  :=\sum_{i=1}^N \pi_i(w_i) + \int_X R_{\sigma}(x, w) \mbox{d} \mu(x).
\end{equation}

Before going further, we would like to remark that  the dual formulation of \pref{minijmu} is naturally related to entropic optimal transport. Indeed, the Fenchel-Rockafellar dual problem of \pref{minijmu} reads as the maximization problem over labour variables:
\[\sup_{(l_1, \ldots, l_N)\in \R_+^N} \left\{\sum_{i=1}^N f_i(l_i) -C_{\sigma}(l)\right\}  \]
where 
\begin{equation}\label{ctlr}
C_\sigma(l):=\sup_{w\in \R_+^N} \Big\{\sum_{i=1}^N l_i w_i -\int_X R_{\sigma}(x,w) \mbox{d} \mu(x) \Big\}
\end{equation}
it is obvious that $C_{\sigma}(l)=+\infty$ unless $l_i\geq 0$ and $\sum_{i=1}^{N} l_i \leq 1$. For such $(l_1, \ldots, l_N)$ setting $l_0:=1-\sum_{i=1}^N l_i$, we may view $l=(l_0, l_1, \ldots, l_N)$ as a probability vector with $l_0$ being the probability of not working  (for wage $w_0$ and zero commuting cost) and $l_i$, $i\geq 1$, being the probability of working for firm $i$; in this case, the optimality conditions for the optimal $w$ in \pref{ctlr} are
\[l_i=\int_X \frac{e^{\frac{w_i-c_i(x)}{\sigma}}}{ \sum_{j=0}^N e^{\frac{w_j-c_j(x)}{\sigma}}} \mbox{d}\mu(x), \; i=0, \ldots, N\]
which is easily seen to imply that the probabilities
\[\gamma(i,x):=\frac{e^{\frac{w_i-c_i(x)}{\sigma}}}{ \sum_{j=0}^N e^{\frac{w_j-c_j(x)}{\sigma}}}\]
solve the entropic optimal transport problem
\[\inf_\gamma \Big\{ \sum_{i=0}^N c_i(x) \gamma(i,x) \mbox{d} \mu(x) + \sigma \sum_{i=0}^N \int_X \gamma(i,x) \log(\gamma(i,x)) \mbox{d} \mu(x)\Big\}\]
subject to the mass conservation constraints:
\[\sum_{i=0}^N \gamma(i,x)=1, \; \forall x, \; l_i=\int_X \gamma(i,x) \mbox{d} \mu(x), \; i=0, \ldots, N.\]
In other words, the dual of \pref{minijmu} consists in maximizing the total production net of (entropically regularized) transport. For more on connections between semi-discrete spatial equilibria and optimal transport, we refer to Crippa, Jimenez and Pratelli \cite{CJP2009} and Carlier and Mallozzi \cite{CM2018}.

\begin{lem}\label{variat}
For every $\mu\in \PP(X)$, \pref{minijmu} admits a unique minimizer  $w^*(\mu)$ and there exist  constants $\uw$ and $\ow$  that do not depend on $\mu$  such that  $0< \uw<\ow$ and $w^*(\mu)\in [\uw, \ow]^N$. Moreover $w^*(\mu)$ is the only solution of \pref{eqlabour} and  the map $\mu \in \PP(X) \mapsto w^*(\mu)\in [\uw, \ow]^N$ is weakly $*$ continuous. 
\end{lem}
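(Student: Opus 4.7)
My plan is to treat the four assertions in the order they appear, exploiting the convex structure of $J_\mu$ throughout; the dominant idea is that $J_\mu$ is a well-behaved strictly convex variational problem whose first-order condition is precisely \pref{eqlabour}.

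\textbf{Step 1 (existence, uniqueness, optimality).} I first check that $J_\mu$ is continuous on $(0,+\infty)^N$, strictly convex, and coercive. Strict convexity follows because each $\pi_i$ is strictly convex in $w_i$ while $w\mapsto R_\sigma(x,w)$ is convex: along any direction $v\neq 0$, picking $i$ with $v_i\neq 0$, the restriction of $\pi_i$ is strictly convex in that direction. Coercivity comes from two limits. By \pref{prilimit}, $\pi_i(w_i)\to+\infty$ as $w_i\to 0^+$, so $J_\mu$ blows up near the boundary $\{w_i=0\}$; and using $R_\sigma(x,w)\geq \max_i(w_i-c_i(x))\geq \max_i w_i-C$ with $C:=\max_i \NLfty{c_i}{(X)}$, one has $\int R_\sigma\, d\mu\to+\infty$ as $\max_i w_i\to+\infty$. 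Hence $J_\mu$ admits a unique minimizer $w^*(\mu)\in(0,+\infty)^N$; being smooth on the interior, $J_\mu$ has $w^*(\mu)$ as its unique critical point, characterized by \pref{eqlabour}.

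\textbf{Step 2 (uniform bounds).} I read the first-order condition componentwise as
\[L_i(w_i^*)=\int_X \frac{e^{(w_i^*-c_i(x))/\sigma}}{\sum_{j=0}^N e^{(w_j^*-c_j(x))/\sigma}}\, d\mu(x).\]
The integrand lies in $[0,1]$, so $L_i(w_i^*)\leq 1$; by \pref{Llimit} and strict decrease of $L_i$ this gives $w_i^*\geq \uw:=\min_i L_i^{-1}(1)>0$. For the upper bound, set $\kappa:=(N+1)^{-1}e^{-2C/\sigma}$ and pick $i_0\in\argmax_{j=1,\ldots,N} w_j^*$. If $w_{i_0}^*\geq w_0$, every denominator term is at most $e^{(w_{i_0}^*+C)/\sigma}$ while the $i_0$-numerator is at least $e^{(w_{i_0}^*-C)/\sigma}$, so the ratio is $\geq \kappa$ for every $x$, forcing $L_{i_0}(w_{i_0}^*)\geq \kappa$ and hence $w_{i_0}^*\leq L_{i_0}^{-1}(\kappa)$. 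Either way, $\max_j w_j^*\leq \ow:=\max\bigl(w_0,\max_i L_i^{-1}(\kappa)\bigr)$, a bound depending only on $\sigma$, $w_0$, the commuting costs, and the production functions.

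\textbf{Step 3 (weak-$*$ continuity).} Given $\mu_n\weakstarto\mu$, Step 2 keeps $w^*(\mu_n)$ in the compact box $[\uw,\ow]^N$, so any subsequence admits a further subsequence $w^*(\mu_{n_k})\to w^\infty\in[\uw,\ow]^N$. For arbitrary test point $w\in(0,+\infty)^N$, the optimality inequality $J_{\mu_{n_k}}(w^*(\mu_{n_k}))\leq J_{\mu_{n_k}}(w)$ can be passed to the limit: continuity of $R_\sigma$ on the compact product $X\times[\uw,\ow]^N$ ensures that $R_\sigma(\cdot,w^*(\mu_{n_k}))$ converges uniformly on $X$ to $R_\sigma(\cdot,w^\infty)$, and weak-$*$ convergence of $\mu_{n_k}$ then gives $J_\mu(w^\infty)\leq J_\mu(w)$. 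Uniqueness of the minimizer forces $w^\infty=w^*(\mu)$, and the standard subsequence argument yields convergence of the whole sequence. The delicate point in the whole proof is the upper bound of Step 2: the softmax ratio $\partial R_\sigma/\partial w_i$ has no uniform positive lower bound, and one must localize at the index $i_0$ realizing the maximum wage to extract the constant $\kappa>0$, then invoke \pref{Llimit} to convert it into an upper bound on $w_{i_0}^*$.
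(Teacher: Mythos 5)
Your proof is correct, and while the overall skeleton (strict convexity of $J_\mu$, a uniform compact box $[\uw,\ow]^N$, then a subsequence/uniform-convergence argument for weak-$*$ continuity) matches the paper's, your derivation of the box is genuinely different. The paper never touches the first-order conditions at this stage: it sandwiches $J_\mu(w)$ between $\sum_i\pi_i(w_i)+\max_i w_i\pm M$ (up to the $\sigma\log(N+1)$ term), compares with the test point $(w_0,\ldots,w_0)$ to bound the entire sublevel set $\{J_\mu\le J_\mu(w_0,\ldots,w_0)\}$, reads off $\max_i w_i\le\ow$ and $\pi_i(w_i)\le\ow$ simultaneously, and extracts $\uw$ from $\pi_i(0^+)=+\infty$ via \pref{prilimit}; existence then follows by Weierstrass on the box, with no separate coercivity step. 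You instead prove existence by coercivity on the open orthant and then extract the bounds from the Euler--Lagrange equation $L_i(w_i^*)=\int_X\frac{\partial R_\sigma}{\partial w_i}\,d\mu$: the lower bound from $L_i(w_i^*)\le 1$ and the upper bound by localizing at the maximal-wage index $i_0$ to get the uniform lower bound $\kappa=(N+1)^{-1}e^{-2C/\sigma}$ on the Gibbs weight, then inverting the strictly decreasing bijection $L_i$ via \pref{Llimit}. Your route yields bounds with a cleaner economic reading (the equilibrium wage is pinned between the wage at which demand absorbs the whole population and the wage at which demand equals the minimal attainable supply share $\kappa$), at the price of needing the bound to hold only at critical points rather than on a whole sublevel set; the paper's route is slightly more robust in that it bounds all near-minimizers and dispenses with differentiating under the integral before the bounds are in hand. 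Both arguments are complete; the remaining minor checks in yours (that $L_i$ is a decreasing bijection of $(0,+\infty)$ onto itself, so $L_i^{-1}(1)$ and $L_i^{-1}(\kappa)$ exist, and that $\kappa<1$ forces $\uw<\ow$) follow directly from \pref{Llimit} and the strict concavity of $f_i$.
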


\begin{proof}
Let $M:= \max_i \Vert c_i \Vert_{\infty}$; due to the form of $R_\sigma$ in \pref{defrsig}, one has
\[ \sum_{i=1}^N \pi_i(w_i)+ \sigma  \log(N+1)+  \max_{i=0, \ldots, N} w_i +M   \geq J_\mu(w) \geq \sum_{i=1}^N \pi_i(w_i) + \max_{i=0, \ldots, N} w_i -M.\]
Let $w\in \R_+^N$ be such that  $J_\mu(w) \leq J_\mu(w_0, \ldots, w_0)$, then, one has:
\[\begin{split}
 \max_{i=1, \ldots, N} w_i  + \sum_{i=1}^N \pi_i(w_i)& \leq M + J_\mu(w_0, \ldots w_0) \\
&\leq  \ow:= 2M + \sum_{i=1}^N \pi_i(w_0) + \sigma \log(N+1)+ w_0
\end{split}\]
since, for every $i$, both $w_i$ and $ \pi_i(w_i)$ are nonnegative, this yields
\[\max_{i=1, \ldots, N} w_i \leq \ow, \mbox{ and } \max_{i=1, \ldots, N} \pi_i(w_i) \leq  \sum_{i=1}^N \pi_i(w_i) \leq \ow  \]
and thanks to \pref{prilimit}, there exists $\uw\in (0, \ow)$ such that $\pi_i^{-1}([0, \ow]) \subset  [\uw, +\infty)$. Hence the infimum in \pref{minijmu} coincides with $\inf_{w \in [\uw, \ow]^N} J_\mu(w)$ which is achieved by  continuity of $J_\mu$ on the compact set  $[\uw, \ow]^N$. Uniqueness follows from the strict convexity of $J_\mu$ and since the minimizer of $J_\mu$, $w^*(\mu)$,  lies in  interior of $\R_+^N$, it is the unique critical point of $J_\mu$ hence the only solution of \pref{eqlabour}. Finally, let us assume that $(\mu_n)_n$ is a sequence in $\PP(X)$ weakly $*$ converging to $\mu$, then $w_n:= w^*(\mu_n)$ taking values in the compact set $[\uw, \ow]^N$, $(w_n)_n$ admits a (not relabeled) subsequence which converges to some $w^*\in [\uw, \ow]^N$. Since $R_\sigma(., w_n)$ converges uniformly to $R_\sigma(., w)$, we have
\[ \lim_n J_{\mu_n}(w_n) = J_\mu(w^*)\]
and since for every $w\in (0, +\infty)^N$, $J_{\mu_n}(w_n) \leq J_{\mu_n}(w)$, passing to the limit yields
\[J_{\mu}(w^*)\leq J_\mu(w)\]
so that $w^*=w^*(\mu)$ and the whole sequence $(w_n)_n$ converges to $w^*(\mu)$. 

\end{proof}

\begin{thm}\label{existeqsig}
Under the general assumptions of paragraph \ref{assum}, there exists an equilibrium (in the sense of definition \ref{equidef}) which corresponds  to a vector of wages $w\in [\uw, \ow]^N$ where $0< \uw<\ow$ are the  bounds from Lemma \ref{variat}.
\end{thm}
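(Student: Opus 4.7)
The plan is to reformulate the equilibrium problem as a fixed-point problem on $\PP(X)$ and apply Schauder's theorem. Given any $\mu\in\PP(X)$, Lemma \ref{variat} produces a unique $w^*(\mu)\in[\uw,\ow]^N$ solving \pref{eqlabour}, and formula \pref{defmuw} then produces a probability density $\mu_{w^*(\mu)}\in\PP(X)$. I would define
\[
T\colon\PP(X)\to\PP(X),\qquad T(\mu):=\mu_{w^*(\mu)},
\]
and observe that any fixed point $\mu=T(\mu)$, together with $w:=w^*(\mu)$, is an equilibrium in the sense of Definition \ref{equidef}: by construction $w$ solves \pref{eqlabour} for this $\mu$, and the fixed-point identity is precisely $\mu=\mu_w$.

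To apply Schauder, I first check that $T$ is well-defined and that $w\mapsto\mu_w$ is continuous on $[\uw,\ow]^N$. Since $c_0\equiv 0$ and $w_0>0$, the $i=0$ summand in \pref{defrsig} already gives the uniform lower bound $R_\sigma(x,w)\geq w_0>0$ on $X\times[\uw,\ow]^N$, while an upper bound on the same set is obvious. Hence $R_\sigma(\cdot,w)^{\theta/(1-\theta)}$ is a jointly continuous function of $(x,w)$ that is uniformly bounded above and away from zero, so the integral in the denominator of \pref{defmuw} is bounded away from $0$, and $w\mapsto\mu_w$ is continuous from $[\uw,\ow]^N$ into $C(X)$ (and \emph{a fortiori} into $\PP(X)$ endowed with the weak-$*$ topology).

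Combining this with the weak-$*$ continuity of $\mu\mapsto w^*(\mu)$ proved in Lemma \ref{variat}, the map $T$ is weak-$*$ continuous. Since $X$ is compact, $\PP(X)$ is a nonempty, convex, weak-$*$ compact and metrizable subset of the dual of $C(X)$, so Schauder's fixed-point theorem yields $\mu$ with $\mu=T(\mu)$, and the associated wage vector $w=w^*(\mu)\in[\uw,\ow]^N$ provides the desired equilibrium. I do not expect a substantial obstacle: the only slightly delicate point is the uniform strict positivity of $R_\sigma$, which is what makes \pref{defmuw} behave continuously in $w$, and this is automatic from the stay-at-home option $w_0>0$ once wages are restricted to the a priori box $[\uw,\ow]^N$ of Lemma \ref{variat}.
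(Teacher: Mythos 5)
Your proof is correct and rests on exactly the same two ingredients as the paper's: Lemma \ref{variat} (well-posedness, the a priori box $[\uw,\ow]^N$, and weak-$*$ continuity of $\mu\mapsto w^*(\mu)$) and the continuity of $w\mapsto\mu_w$, which you rightly trace back to the lower bound $R_\sigma\geq w_0>0$ supplied by the stay-at-home option. The only difference is where you run the fixed-point argument: you compose the two maps in the order $\mu\mapsto w^*(\mu)\mapsto\mu_{w^*(\mu)}$ and apply Schauder on the weak-$*$ compact convex set $\PP(X)$, whereas the paper composes them the other way, $w\mapsto\mu_w\mapsto w^*(\mu_w)$, obtaining a continuous self-map of the finite-dimensional compact convex set $[\uw,\ow]^N$ and invoking only Brouwer. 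The two are equivalent (a fixed point of either composition yields one of the other), but the paper's ordering avoids the infinite-dimensional machinery --- metrizability of $\PP(X)$, Schauder--Tychonoff --- entirely; your version is no less valid, just slightly heavier than necessary.
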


\begin{proof}
Let us define for every $w\in [\uw, \ow]^N$, $\Phi(w):=w^*(\mu_w)$ where $w^*$ is the continuous map defined in Lemma \ref{variat} and $\mu_w$ is defined by \pref{defmuw}. It follows from Lemma \ref{variat} that $\Phi$ is a continuous self-map of  $[\uw, \ow]^N$, it therefore admits a fixed-point thanks to Brouwer's fixed-point theorem, such a fixed-point being an equilibrium by construction, this ends the proof.

\end{proof}

\subsection{A regime of uniqueness}

We are now going to combine the structure of the equilibrium conditions, the implicit function theorem and a continuation argument, to establish uniqueness of the equilibrium if the exponent $\theta$  is small enough (see \pref{cobbdu} for the Cobb-Douglas specification of agents preferences). Since we will need to differentiate the equilibrium conditions, we shall need an extra degree of smoothness of the production functions:
\begin{equation}\label{fisstrict}
\forall i \in \{1, \ldots, N\}, \; f_i \in C^2((0, + \infty)) \mbox{ and }  \; f''_i(l)<0, \; \forall l\in (0,+\infty).
\end{equation}
Since $-\pi'_i$ is the inverse of $f'_i$, this implies 
\begin{equation}\label{pistrict}
\forall i \in \{1, \ldots, N\}, \; \pi_i \in C^2((0, + \infty)) \mbox{ and }  \; \pi''_i(w)>0, \; \forall w\in (0,+\infty),
\end{equation}
so that, for every $\mu\in \PP(X)$, the function $J_\mu$ is $C^2$ and strongly convex on $[\uw, \ow]^N$. The basic idea behind the proof is easy to grasp: setting $\alpha:=\frac{\theta}{1-\theta}$, recalling \pref{defmuw} and defining
\begin{equation}\label{deftilmu}
\wmu(x, w, \alpha):=  \frac{ R_\sigma(x, w)^{\alpha}} {\int_X R_\sigma(y, w)^{\alpha} \mbox{d} y }, \; \forall (x, w, \alpha) \in X\times \R_+^N \times \R_+
\end{equation}
we see that finding an equilibrium amounts, for fixed $\alpha$, to solving for $w\in \R_+^N$ the system of $N$ nonlinear equations
\begin{equation}\label{zeroG}
G(w, \alpha)=0, 
\end{equation}
where $G=(G_1, \ldots, G_N)$ is given by 
\begin{equation}\label{defG_i}
G_i(w, \alpha)=\pi'_i(w_i)+ \int_X \frac{\partial R_{\sigma}}{\partial w_i}  (x, w) \wmu(x,w, \alpha) \mbox{d}x.
\end{equation}
We already know that for every $\alpha\geq 0$, \pref{zeroG} admits at least one solution and that all such solutions belong to the compact subset of $(0,+\infty)^N$, $[\uw, \ow]^N$ (see  Lemma \ref{variat}). For $\alpha=0$, $\wmu_0:=\wmu(., w, 0)$ does not depend on $w$ and is the density of the uniform probability measure on $X$, hence for $\alpha=0$ there exists a unique equilibrium which is the unique minimizer of the strictly convex function $J_{\wmu_0}$. Now observing that $G$ is of class $C^1$ on $(0,+\infty)\times \R_+$ and that the Jacobian of $G$ can be written as  a perturbation of order $\alpha$  (see details in the proof below) of a symmetric definite positive matrix, (at least local) uniqueness for small $\alpha$ follows from  the implicit function theorem. More precisely, defining 
\begin{equation}\label{defdealpha0}
\alpha_0:=   \frac{w_0}{N} \min\{ \pi_i''(w), \; i=\{1, \ldots, N\},  \; w \in [\uw, \ow]\},
\end{equation}
 we have:

\begin{thm}\label{thuniq}
In addition to the general assumptions of paragraph \ref{assum}, suppose further that \pref{fisstrict} holds. Defining  $\alpha_0$ as in \pref{defdealpha0}, and 
\[\theta_0:=\frac{\alpha_0}{1+\alpha_0},\]
then, for every $\theta \in [0, \theta_0]$, there exists a unique equilibrium (in the sense of definition \ref{equidef}).

\end{thm}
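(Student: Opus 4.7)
The plan is to follow the roadmap outlined in the paragraph preceding the statement. Set $\alpha:=\theta/(1-\theta)\in[0,\alpha_0]$, and recall that equilibria are precisely the zeros in $[\uw,\ow]^N$ of the $C^1$ map $G$ defined by \pref{defG_i} (existence being already granted by Theorem \ref{existeqsig}, and confinement to $[\uw,\ow]^N$ by Lemma \ref{variat}). Under \pref{fisstrict} and \pref{pistrict}, $G$ is $C^1$ on $(0,+\infty)^N\times[0,+\infty)$. I would then show that, for every $\alpha\in[0,\alpha_0]$, the symmetric part of the Jacobian $D_wG(w,\alpha)$ is positive definite uniformly in $w\in[\uw,\ow]^N$. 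The convexity of $[\uw,\ow]^N$ immediately gives, for any two distinct $w_1,w_2\in[\uw,\ow]^N$,
\[
(w_1-w_2)^\top\bigl(G(w_1,\alpha)-G(w_2,\alpha)\bigr)=\int_0^1(w_1-w_2)^\top D_wG(w_t,\alpha)(w_1-w_2)\,\mathrm{d}t>0,
\]
hence injectivity of $G(\cdot,\alpha)$ and thus uniqueness.

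For the Jacobian, I would decompose $D_wG(w,\alpha)=A(w)+M(w,\alpha)+P(w,\alpha)$ where $A(w)=\operatorname{diag}(\pi''_i(w_i))$, $M_{ij}(w,\alpha)=\int_X\frac{\partial^2 R_\sigma}{\partial w_i\partial w_j}(x,w)\,\wmu(x,w,\alpha)\,\mathrm{d}x$, and $P_{ij}(w,\alpha)=\int_X\frac{\partial R_\sigma}{\partial w_i}(x,w)\,\frac{\partial \wmu}{\partial w_j}(x,w,\alpha)\,\mathrm{d}x$. By \pref{pistrict}, $A(w)$ has eigenvalues bounded below by $m:=\min\{\pi''_i(w):i=1,\ldots,N,\ w\in[\uw,\ow]\}>0$. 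Since $R_\sigma(x,\cdot)$ is a log-sum-exp, hence convex, $M$ is symmetric and positive semi-definite. At $\alpha=0$ the density $\wmu(\cdot,w,0)\equiv 1/|X|$ is independent of $w$, so $P\equiv 0$ and $G(\cdot,0)=\nabla J_{1/|X|}$, which recovers uniqueness for $\alpha=0$ directly from the strict convexity already used in Lemma \ref{variat}.

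Differentiating \pref{deftilmu} gives
\[
\frac{\partial \wmu}{\partial w_j}(x,w,\alpha)=\alpha\,\wmu(x,w,\alpha)\!\left[\frac{1}{R_\sigma(x,w)}\frac{\partial R_\sigma}{\partial w_j}(x,w)-\int_X\frac{1}{R_\sigma(y,w)}\frac{\partial R_\sigma}{\partial w_j}(y,w)\,\wmu(y,w,\alpha)\,\mathrm{d}y\right],
\]
so each $P_{ij}$ is $\alpha$ times the $\wmu$-covariance of the Gibbs probability $\frac{\partial R_\sigma}{\partial w_i}$ and of $\frac{1}{R_\sigma}\frac{\partial R_\sigma}{\partial w_j}$. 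Using the centering against $\wmu$, the a priori bounds $\frac{\partial R_\sigma}{\partial w_i}\in[0,1]$ (cf. \pref{probaix}), $R_\sigma\geq w_0$ and $\sum_{i=0}^N\frac{\partial R_\sigma}{\partial w_i}\equiv 1$, a careful application of Cauchy--Schwarz yields $|P_{ij}(w,\alpha)|\leq\alpha/w_0$ uniformly in $w\in[\uw,\ow]^N$; passing to the Frobenius norm then gives $\|P(w,\alpha)\|_2\leq N\alpha/w_0$, and therefore
\[
v^\top D_wG(w,\alpha)v\geq\bigl(m-N\alpha/w_0\bigr)\|v\|^2\geq 0,
\]
with strict inequality for $\alpha<\alpha_0=mw_0/N$. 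The equality case $\alpha=\alpha_0$ is then recovered either by a sharper covariance estimate (removing the factor $N$ thanks to $\sum_{i=1}^N\int\frac{\partial R_\sigma}{\partial w_i}\wmu\,\mathrm{d}x\leq 1$) or by taking $\alpha_n\uparrow\alpha_0$ and exploiting the continuity statement in Lemma \ref{variat}.

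The main obstacle is precisely this quantitative control of $P$: the positive-definite contribution $A+M$ is almost immediate, but absorbing $\|P\|$ into the diagonal lower bound $m$ uniformly over $\alpha\leq\alpha_0$ forces one to really exploit both the centering of $\partial_{w_j}\wmu$ and the probability-vector identity for the Gibbs weights, so that the bookkeeping of the constants matches the specific threshold $\alpha_0$ in \pref{defdealpha0}. Once this step is in place, strict monotonicity of $G(\cdot,\alpha)$ on the convex compact set $[\uw,\ow]^N$, and hence uniqueness of the equilibrium, follow in one line.
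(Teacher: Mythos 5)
Your Step~1 (the Jacobian estimate) is essentially the paper's: the same decomposition $D_wG=\operatorname{diag}(\pi_i'')+M+P$, the same bounds $\vert\partial R_\sigma/\partial w_i\vert\le 1$, $\vert\partial\wmu/\partial w_j\vert\le(\alpha/w_0)\wmu$, and the same resulting constant $N\alpha/w_0$ matching the threshold $\alpha_0$ in \pref{defdealpha0}. Where you genuinely diverge is in how you convert invertibility of the Jacobian into uniqueness. The paper runs a continuation argument: it defines $I:=\{\alpha\in[0,\alpha_0]:\text{the zero of }G(\cdot,\alpha)\text{ is unique}\}$, notes $0\in I$, and shows $I$ is both open and closed in $[0,\alpha_0]$ via the implicit function theorem and the compactness of $[\uw,\ow]^N$. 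You instead integrate the Jacobian along the segment joining two putative equilibria (legitimate, since Lemma \ref{variat} confines all equilibria to the convex set $[\uw,\ow]^N\subset(0,+\infty)^N$ and your quadratic-form bound is uniform in $w$ there) to get strict monotonicity of $G(\cdot,\alpha)$, hence injectivity in one line. Your route is shorter and avoids the topological bookkeeping entirely; the paper's route has the advantage of producing along the way a $C^1$ branch $\alpha\mapsto w(\alpha)$ of equilibria, but for the bare uniqueness statement your argument is cleaner.

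One point needs tightening: your treatment of the endpoint $\alpha=\alpha_0$. The fallback you sketch (take $\alpha_n\uparrow\alpha_0$ and invoke the continuity in Lemma \ref{variat}) does not work as stated, because uniqueness for each $\alpha_n$ does not pass to the limit --- two distinct zeros at $\alpha_0$ could both be limits of the unique zeros at $\alpha_n$ only if you already control the Jacobian at $\alpha_0$, which is the point at issue. The correct fix is the one already implicit in your decomposition: $M=\int_X D^2_{ww}R_\sigma\,\wmu$ is not merely positive \emph{semi}-definite but positive \emph{definite}, because $w\mapsto R_\sigma(x,w)$ is strictly convex in $(w_1,\dots,w_N)$ with $w_0$ held fixed (the Gibbs weights sum to $1$ only after including the index $i=0$, so the usual degeneracy of log-sum-exp along the constant direction is absent). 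Then
\[
v^\top D_wG(w,\alpha)\,v\;\ge\;\Bigl(m-\tfrac{N\alpha}{w_0}\Bigr)\Vert v\Vert^2+v^\top M v\;>\;0
\]
for all $v\neq 0$ and all $\alpha\in[0,\alpha_0]$, including $\alpha=\alpha_0$, and your monotonicity argument closes without any limiting procedure. This is exactly how the paper handles the endpoint (its matrix $B$ is asserted to be symmetric positive definite). With that single adjustment your proof is complete.
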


\begin{proof} Let us denote by $\cdot $ the usual scalar product on $\R^N$ and by $\vert . \vert$ the corresponding euclidean norm on $\R^N$. Take $(w, \alpha) \in [\uw, \ow]^N$, $\alpha\in [0,1]$. 

{\textbf{Step 1:  invertibility of the Jacobian of $G$.}} Let us denote by $A_{ij}:=\frac{\partial G_i}{\partial w_j}(w, \alpha)$ the entries of the Jacobian (with respect to $w$) matrix of $G$ at $(w, \alpha)$:
\begin{equation}\label{jac1}
A_{ij}= \pi_i''(w_i)\delta_{ij} + \int_X \frac{\partial^2 R_{\sigma}}{\partial w_j \partial w_i } \wmu + \int_X \frac{\partial R_{\sigma}}{\partial w_i} \frac{\partial \wmu} { \partial w_j }.  
\end{equation}
In matrix form, this reads as
\[A:= {\mathrm{diag}}( \pi_1''(w_1), \ldots, \pi_N''(w_N))+ B + C\]
where $B$ is symmetric positive definite and $C$ is a mixture of rank one matrices
\[B:= \int_X  D^2_{ww} R_{\sigma}(x,w) \wmu (x, w, \alpha) \mbox{d}x, \; C:= \int_X  \nabla_w R_{\sigma}(x,w) \nabla_w \wmu  ^{\top} (x, w, \alpha) \mbox{d}x. \]
Hence, by Cauchy-Schwarz inequality,  for $\xi \in \R^N$, we  have
\begin{equation}\label{jac2}
 A \xi \cdot \xi \geq   \Big( \nu -  \int_X \vert \nabla_w R_\sigma(x, w)\vert \vert \nabla_w \wmu(x, w, \alpha) \vert \mbox{d} x  \Big) \vert \xi\vert^2 + B\xi \cdot \xi 
 \end{equation}
for 
 \begin{equation}\label{defdenu}
 \nu:=\min_{i=1,  \ldots, N} \pi_i''(w_i) \geq \min\{ \pi_i''(w), \; i=\{1, \ldots, N\},  \; w \in [\uw, \ow]\}.
\end{equation}
Since, by \pref{probaix}
\begin{equation}\label{petitesdp}
0 \leq \frac{\partial R_{\sigma}}{\partial w_i} (x,w) \leq 1
\end{equation}
we have 
\begin{equation}\label{petitgradR}
 \vert \nabla_w R_\sigma(x, w)\vert \leq \sqrt{N}.
\end{equation}
Let us now estimate the partial derivatives of $\wmu$ with respect to the $w_j$'s:
\[\frac{\partial \wmu}{\partial w_j}(x, w, \alpha) = \frac{\alpha R_{\sigma}^{\alpha-1}(x,w) }{\int_X R_{\sigma}^\alpha}  \frac{\partial R_{\sigma}}{\partial w_j}(x,w) -\alpha \frac{R_{\sigma}^{\alpha}(x,w)}{\Big(\int_X R_{\sigma}^\alpha\Big)^2}\int_X R_{\sigma}^{\alpha-1}    \frac{\partial R_{\sigma}}{\partial w_j}   \]
together with \pref{petitesdp} and $R_\sigma \geq w_0$ so that $R_\sigma^{\alpha-1} \leq \frac{R_\sigma^{\alpha}}{w_0}$, this yields
\[0 \leq  \frac{R_{\sigma}^{\alpha-1}(x,w) }{\int_X R_{\sigma}^\alpha}  \frac{\partial R_{\sigma}}{\partial w_j}(x,w) \leq \frac{ R_{\sigma}^{\alpha}(x,w) }{w_0 \int_X R_{\sigma}^{\alpha} }    \]
and
\[0 \leq  \frac{R_{\sigma}^{\alpha}(x,w)}{\Big(\int_X R_{\sigma}^\alpha\Big)^2}\int_X R_{\sigma}^{\alpha-1}    \frac{\partial R_{\sigma}}{\partial w_j} \leq  \frac{ R_{\sigma}^{\alpha}(x,w) }{w_0 \int_X R_{\sigma}^{\alpha} } \]
so that
\[\Big \vert \frac{\partial \wmu}{\partial w_j}(x, w, \alpha)\Big \vert \leq \frac{  \alpha}{w_0} \frac{R_{\sigma}^{\alpha}(x,w)}{\int_X R_{\sigma}^\alpha}\]
hence 
\begin{equation}\label{petitgradwmu}
\int_X   \vert \nabla_w \wmu(x, w, \alpha) \vert \mbox{d} x \leq \frac{ \alpha \sqrt{N} }{w_0}.
\end{equation}
Using \pref{petitgradR}-\pref{petitgradwmu} in \pref{jac2}, we thus get
\[A \xi \cdot \xi \geq \Big(\nu- \frac{ \alpha N}{w_0}\Big) \vert \xi\vert^2 + B\xi \cdot \xi\]
which, since $B$ is symmetric positive definite,   enables us to conclude with \pref{defdenu} that $A$ is invertible whenever $\alpha \in [0, \alpha_0]$.

\smallskip

{\textbf{Step 2:  uniqueness.}} Let us define
\[I:=\{\alpha \in [0, \alpha_0] \; : \mbox{ there exists a unique } w \in [\uw, \ow]^N \mbox{ such that } G(w, \alpha)=0\}.\]
We have already observed that $0\in I$ so that $I$ is nonempty, so proving that $I$ is closed and open in $[0, \alpha_0]$ will yield the desired uniqueness result. We can deduce from the previous step and the implicit function theorem that for every $\alpha \in [0, \alpha_0]$ and every $w\in \R_+^N$ such that $G(w, \alpha)=0$ there is some neighbourhood of $(\alpha, w)$ say $((\alpha-\eps, \alpha+\eps)\cap [0, \alpha_0])\times B(w, r)$ for some $\eps>0$, $r>0$ and a $C^1$ curve $\gamma_{\alpha, w}$: $(\alpha-\eps, \alpha+\eps) \cap [0, \alpha_0] \to \R^N$ such that  $G^{-1}(\{0\}) \cap( B(w, r) \times ((\alpha-\eps, \alpha+\eps) \cap [0, \alpha_0]))=\{(\gamma_{\alpha, w}(\alpha'), \alpha' ), \; \alpha'\in (\alpha-\eps, \alpha+\eps) \cap [0, \alpha_0]\}$. Let $(\alpha_n)_n \in I^{\N}$ converge to some $\alpha$; if $\alpha$ was not in $I$, we could find $w\neq \hat{w}$ with $G(w, \alpha)=G(\hat{w}, \alpha)=0$, but then for $n$, large enough we would have $\gamma_{\alpha, w} (\alpha_n) \neq \gamma_{\alpha, \hat{w}} (\alpha_n)$ contradicting the fact that $\alpha_n\in I$. Hence $I$ is closed. Now let $\alpha \in I$ and $w$ be the only root of $G(w, \alpha)=0$. If $I$  was not a neighbourhood of $\alpha$ in $[0, \alpha_0]$, we could find a sequence $(\alpha_n)$ in $[0, \alpha_0]\setminus I$ converging to $\alpha$. For each $n$, we could pick $w_n\neq \hat{w}_n$ with $G(\hat{w}_n, \alpha_n)=G(w_n, \alpha_n)$, since both sequences $(w_n)$ and $(\hat{w}_n)_n$ take values in the compact set $[\uw, \ow]^N$, both $(w_n)$ and $(\hat{w}_n)$ should converge to $w$ so that, for large enough $n$,  we should have $w_n=\hat{w}_n=\gamma_{\alpha, w}(\alpha_n)$ which yields the desired contradiction. We thus have shown that $I$ is open in $[0, \alpha_0]$.

\end{proof}

\subsection{The  un-regularized case (zero noise limit)}\label{nonoise}

We now consider the case where $\sigma=0$, for which the revenue of workers is given by \pref{defr0}. In this case, given $w\in \R_+^N$, the set of workers locations for which working for firm $i$ (or staying home for wage $w_0$ if $i=0$) is optimal is
\begin{equation}\label{defvi}
V_i(w):=\{x \in X \; : \; R_0(x,w)= w_i-c_i(x)\}, \; i=0, \ldots, N.
\end{equation}
In case there are ties, i.e. several optimal choices, we also define the set of workers locations for which $i$ is strictly prefered to the other options:
\begin{equation}\label{defvis}
V_i^s(w):= V_i(w) \setminus \cup_{j \neq i} V_j(w).
\end{equation}
Equilibrium on the residential housing market gives the population density as a function of $R_0$:
\begin{equation}\label{defmuw0}
 \mu_w(x):= \frac{ R_0(x, w)^{\frac{\theta}{1-\theta}}   } {\int_X R_0(y, w)^{\frac{\theta}{1-\theta}} \mbox{d} y   }, \; \forall x\in X.
\end{equation}
The labour demand of firm $i$ is determined by \pref{Lienv} exactly as in paragraph \ref{assum}. As for the total labour supply for location $i=1, \ldots, N$, because of possible ties\footnote{An easy way to rule out ties is the following. Since $\mu_w$ is absolutely continuous, one way to ensure that $V_i(w)\setminus V_i^s(w)$ is neglible is to assume that for every $i,j$ with $i\neq j$ and any $\lambda\in \R$, the level set $\{x\in X \; :  \; c_i(x)-c_j(x)=\lambda\}$ is Lebesgue-negligible.}, it has to lie in the interval $[\mu_w(V_i^s(w)), \mu_w(V_i(w))]$. Equilibrium on the labour market then reads 
\begin{equation}\label{lme01}
-\pi'_i(w_i) \in [\mu_w(V_i^s(w)), \mu_w(V_i(w))], \; \forall i=\{1, \ldots, N\}
\end{equation}
supplemented with an additional consistency condition: workers who are not hired by any firm are those for which  $w_0$ is optimal:
\begin{equation}\label{lme00}
1+\sum_{i=1}^N \pi'_i(w_i) \in [\mu_w(V_0^s(w)), \mu_w(V_0(w))].
\end{equation}
In this context, a spatial equilibrium is a collection of positive wages $w$ for which \pref{lme01}-\pref{lme00} are satisfied with $\mu_w$ given by \pref{defmuw0}. Existence is ensured by:

\begin{prop}
There exists $w\in (0, +\infty)^N$ such that  \pref{lme01}-\pref{lme00} are satisfied with $\mu_w$ given by \pref{defmuw0}.
\end{prop}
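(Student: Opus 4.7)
The natural plan is a vanishing viscosity argument: construct equilibria $w^\sigma \in [\underline w, \overline w]^N$ for $\sigma>0$ via Theorem \ref{existeqsig}, pass to the limit $\sigma \to 0^+$, and verify that the limit wage vector satisfies \pref{lme01}–\pref{lme00}.

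First, I would observe that the constants $\underline w, \overline w$ produced in Lemma \ref{variat} stay bounded as $\sigma \to 0^+$: inspection of the bound $\overline w = 2M + \sum_i \pi_i(w_0) + \sigma \log(N+1) + w_0$ shows it is bounded uniformly on any interval $\sigma \in (0, \sigma_0]$, and the corresponding $\underline w>0$ produced by \pref{prilimit} is likewise $\sigma$-independent. Hence for any sequence $\sigma_n \to 0^+$, the associated regularized equilibria $w^{\sigma_n}$ lie in a fixed compact set $[\underline w_0, \overline w_0]^N \subset (0,+\infty)^N$. Extract a subsequence so that $w^{\sigma_n} \to w^*$ with $w^* \in [\underline w_0, \overline w_0]^N$. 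Using \pref{softmaxapprox}, $R_{\sigma_n}(\cdot, w^{\sigma_n}) \to R_0(\cdot, w^*)$ uniformly on $X$; since $R_0 \geq w_0 > 0$, raising to the power $\theta/(1-\theta)$ and normalizing, the densities $\mu_{w^{\sigma_n}}$ converge uniformly to the density $\mu_{w^*}$ defined by \pref{defmuw0}, in particular in total variation.

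Next, I would analyse the labour-supply integrands $p_i^{\sigma_n}(x) := \partial_{w_i} R_{\sigma_n}(x, w^{\sigma_n})$, which are the Gibbs weights \pref{probaix}. For $x$ belonging to $X\setminus V_i(w^*)$, there exists $j\neq i$ with $w^*_j - c_j(x) > w^*_i - c_i(x)$; by continuity of all data this strict inequality persists with $w^{\sigma_n}$ in place of $w^*$, and the exponentials force $p_i^{\sigma_n}(x) \to 0$. For $x \in V_i^s(w^*)$, the same reasoning applied in reverse yields $p_i^{\sigma_n}(x) \to 1$. On the (a priori only closed) tie set $V_i(w^*) \setminus V_i^s(w^*)$ no such pointwise limit is guaranteed, but one keeps the bound $0 \leq p_i^{\sigma_n} \leq 1$. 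Combining uniform convergence of $\mu_{w^{\sigma_n}}$ to $\mu_{w^*}$ with the dominated convergence theorem applied on the two "unambiguous" regions, up to a further subsequence
\begin{equation*}
L_i := \lim_n \int_X p_i^{\sigma_n}(x)\, \mu_{w^{\sigma_n}}(x)\, \mathrm{d}x \in \bigl[\mu_{w^*}(V_i^s(w^*)),\ \mu_{w^*}(V_i(w^*))\bigr],
\end{equation*}
for every $i \in \{0,1,\ldots,N\}$.

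Finally, pass to the limit in the regularized labour-market equations \pref{eqlabour}. For $i=1,\ldots,N$, continuity of $\pi'_i$ on $(0,+\infty)$ and the bound $w^*_i \geq \underline w_0 > 0$ yield $\pi'_i(w^{\sigma_n}_i) \to \pi'_i(w^*_i)$, hence $-\pi'_i(w^*_i) = L_i$, which is exactly \pref{lme01}. The relation $\sum_{i=0}^N p_i^{\sigma_n} \equiv 1$ together with $\mu_{w^{\sigma_n}}\in \mathcal{P}(X)$ gives $\sum_{i=0}^N L_i = 1$, so
\begin{equation*}
L_0 = 1 + \sum_{i=1}^N \pi'_i(w^*_i) \in \bigl[\mu_{w^*}(V_0^s(w^*)),\ \mu_{w^*}(V_0(w^*))\bigr],
\end{equation*}
which is \pref{lme00}. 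Thus $w^*$ is an equilibrium in the sense required.

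The main obstacle is the passage to the limit on the tie set $V_i(w^*)\setminus V_i^s(w^*)$, where individual probabilities $p_i^{\sigma_n}$ need not converge pointwise; the argument circumvents this by bounding the contribution of this set between $0$ and $\mu_{w^*}(V_i(w^*)\setminus V_i^s(w^*))$ (which is precisely the slack allowed by \pref{lme01}–\pref{lme00}) and by using that the total mass is preserved, so the excess mass in the interval must sum to the mass of the tie set across all relevant indices.
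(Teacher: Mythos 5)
Your proposal is correct and follows essentially the same route as the paper: both take the zero-noise limit of the regularized equilibria from Theorem \ref{existeqsig}, using the $\sigma$-uniform confinement to $[\uw,\ow]^N$ to extract a convergent subsequence, the pointwise convergence of the Gibbs weights to $1$ on $V_i^s(w^*)$ and to $0$ off $V_i(w^*)$ to trap the limiting labour supply in $[\mu_{w^*}(V_i^s(w^*)),\mu_{w^*}(V_i(w^*))]$, and the sum-to-one identity to obtain \pref{lme00}. Your treatment of the tie set is in fact slightly more explicit than the paper's, which passes over that point with "we easily get"; no changes are needed.
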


\begin{proof}
Eventhough a fixed-point proof is possible, we prefer to give a short proof by passing to the (zero-noise)  limit in the regularized equilibria whose existence is ensured by theorem \ref{existeqsig}. Indeed, we have seen (lemma \ref{variat} and theorem \ref{existeqsig})  that for every $\sigma\in (0,1)$, there exists $w^\sigma$ belonging to the (independent of $\sigma \in (0,1)$) compact subset $[\uw, \ow]^N$ of $(0, + \infty)^N$ such that
\begin{equation}\label{eqapprox}
-\pi'_i(w_i^\sigma)=  \int_X \frac{e^\frac{w_i^\sigma-c_i(x)}{\sigma}}{\sum_{j=0}^{N} e^\frac{w_j^\sigma-c_j(x)}{\sigma}}  \mu_\sigma(x) \mbox{d}x, \; \forall i\in \{1, \ldots, N\}
\end{equation}
where
\begin{equation}
\mu_\sigma(x)= \frac{ R_\sigma(x, w^\sigma)^{\frac{\theta}{1-\theta}}   } {\int_X R_\sigma(y, w^\sigma)^{\frac{\theta}{1-\theta}} \mbox{d} y   }, \; \forall x\in X.
\end{equation}
Up to a subsequence, we may assume that $w^\sigma$ converges to some $w\in [\uw, \ow]^N$ as $\sigma \to 0^+$. This implies that $\pi'_i(w_i^\sigma)$ converges to $\pi'_i(w_i)$ and   $(R_\sigma(., w^{\sigma}), \mu_{\sigma})$ converges uniformly to $(R_0(., w), \mu_w)$. Moreover, for $i=0, \ldots, N$,
\[x\in V^{i}_s(w) \Rightarrow  \lim_{\sigma \to 0^+} \frac{e^\frac{w_i^\sigma-c_i(x)}{\sigma}}{\sum_{j=0}^{N} e^\frac{w_j^\sigma-c_j(x)}{\sigma}} =1\]
and
\[x \in X\setminus V_i(w)  \Rightarrow \lim_{\sigma \to 0^+} \frac{e^\frac{w_i^\sigma-c_i(x)}{\sigma}}{\sum_{j=0}^{N} e^\frac{w_j^\sigma-c_j(x)}{\sigma}} =0.\]
So letting $\sigma \to 0^+$ in \pref{eqapprox}, we easily get that $w$ satisfies \pref{lme01}. As for \pref{lme00}, we remark that \pref{eqapprox} implies
\[1+\sum_{i=1}^N \pi'_i(w_i^\sigma)= \int_X \frac{e^\frac{w_0}{\sigma}}{\sum_{j=0}^{N} e^\frac{w_j^\sigma-c_j(x)}{\sigma}}  \mu_\sigma(x) \mbox{d}x\]
by letting $\sigma\to 0^+$, we obtain \pref{lme00}. 

\end{proof}

\section{A teleworking model}\label{sec-tele}

We now consider a variant of the previous model where teleworking is introduced. Remote (teleworking) workers have no commuting costs and the production function of each firm $i$ depends on both the numbers of on-site workers which we denote $l_i^1$ and the numbers of teleworkers $l_i^2$. We denote by $\tf_i$: $\R_+^2 \to \R_+$ this production function and assume that for $i=1, \ldots, N$, $\tf_i$ is increasing in both arguments, strictly concave and continuous on $\R_+^2$ and $C^1$ on $(0, +\infty)^2$ as well as
\begin{equation}\label{hyptfi1}
\tf_i(0, 0)=0, 
\end{equation}
\begin{equation}\label{hyptfi2}
\lim_{l\to +\infty} \tf_i(l, s) = \lim_{l\to +\infty} \tf_i(s, l)=+\infty, \forall s>0. 
\end{equation}
and
\begin{equation}\label{hyptfi3}
\lim_{(l,s) \in \R_+^2, \; l+ s \to +\infty} \frac{\tf_i(l, s)}{l+s} = 0, 
\end{equation}
which is the case as soon as $\tf_i$ is homonogeneous of degree strictly less than $1$. Typical examples of production functions which fulfill the previous conditions are Cobb-Douglas functions
\[\tf_i(l,s)= A_i l^{\alpha_i} s^{\beta_i}, \; \alpha_i>0, \; \beta_i>0, \; \alpha_i +\beta_i <1\]
or constant elasticity of substitution functions
\[\tf_i(l,s):= A_i (l^{\alpha_i}+ B_i s^{\alpha_i})^{\frac{\beta_i}{\alpha_i}}, \; A_i>0, \; B_i>0, (\alpha_i, \beta_i) \in (0,1)^2.\]

\smallskip

Let us define, for every $\tw=(\tw^1, \tw^2)\in \R_+^2$, the profit of firm $i$ when the vector of wages for on site/remote work is $\tilw$:
\begin{equation}
\tpi_i(\tw)=\sup_{l^1 \geq 0, \; l^2 \geq 0} \{\tf_i(l^1, l^2)-w^1 l^1-w^2 l^2\}.
\end{equation}
It follows from \pref{hyptfi1}, \pref{hyptfi2}, \pref{hyptfi3} that $\tpi_i$ is strictly convex and nonincreasing in both arguments on $\R_+^2$, with
\begin{equation}\label{tpcoerc0}
\tpi_i(0,w)=\tpi_i(w,0)=\lim_{\eps \to 0^+} \tpi_i(\eps, w)=\lim_{\eps \to 0^+} \tpi_i(w, \eps)= +\infty, \; \forall w\geq 0
\end{equation}
and $\tpi_i$ is $C^1$ on $(0,+\infty)$ and for every $(w^1, w^2)\in (0,+\infty)$ and for $k=1, 2$:
\begin{equation}\label{enveltpi}
\frac{\partial \tpi_i}{\partial w^k} (w^1, w^2)=-L_i^{k}(w^1, w^2)
\end{equation}
where 
\begin{equation}\label{demtpi}
(L_i^1(w^1, w^2), L_i^2(w^1, w^2))=\argmax_{l^1\geq 0, \; l^2\geq0 } \{\tf_i(l^1,l^2)-w^1 l^1-w^2 l^2\}
\end{equation}
so that $L_i^1$ represents the demand of on-site labour and $L_i^2$ the demand for remote labour of firm $i$. 

\smallskip

Given a collection of wages $\tw:=(\tw_1, \ldots, \tw_N)\in \R_+^{2N}$, where $\tw_i:=(w_i^1, w_i^2)$  (the superscript $k=1$ corresponding to on site work and $k=2$ to remote work), the revenue of workers living at $x$ takes the form
\begin{equation}
\tR_\sigma(x, \tw)=\sigma  \log \Big( e^{\frac{w_0}{\sigma}} + \sum_{i=1}^N  e^{\frac{w_i^1-c_i(x)}{\sigma}}+  \sum_{i=1}^N  e^{\frac{w_i^2}{\sigma}}\Big).
\end{equation}
Arguing as in section \ref{assum}, assuming the Cobb-Douglas form \pref{cobbdu} for workers' preferences, equilibrium on the residential market implies that the density of workers can be expressed as:
\begin{equation}\label{tilmuw}
\tmu_{\tw}(x):=\frac{ \tR_\sigma(x, \tw)^{\frac{\theta}{1-\theta}}   } {\int_X \tR_\sigma(y, \tw)^{\frac{\theta}{1-\theta}} \mbox{d} y   }, \; \forall x\in X.
\end{equation}
Again arguing as in  in section \ref{assum}, the supply of on-site labour for firm $i$ is given by
\begin{equation}\label{laboursupplyonsite}
\int_X \frac{\partial \tR_\sigma}{\partial w_i^1}(x,w) \tmu_{\tw}(x) \mbox{d}x=  \int_X \frac{e^\frac{w_i^1-c_i(x)}{\sigma}}{  e^{\frac{w_0}{\sigma}} + \sum_{j=1}^N  e^{\frac{w_j^1-c_j(x)}{\sigma}}+  \sum_{j=1}^N  e^{\frac{w_j^2}{\sigma}  }}  \tmu_{\tw} (x) \mbox{d}x. 
\end{equation}
and the supply of remote labour is given 
\begin{equation}\label{laboursupplyremote}
\int_X \frac{\partial \tR_\sigma}{\partial w_i^2}(x,w) \tmu_{\tw}(x) \mbox{d}x=  \int_X \frac{e^\frac{w_i^2}{\sigma}}{  e^{\frac{w_0}{\sigma}} + \sum_{j=1}^N  e^{\frac{w_j^1-c_j(x)}{\sigma}}+  \sum_{j=1}^N  e^{\frac{w_j^2}{\sigma}  }}  \tmu_{\tw} (x) \mbox{d}x. 
\end{equation}
In this setting, an equilibrium is a collection of wages $\tw:=(\tw_1, \ldots, \tw_N)=(w_1^1, w_1^2, \ldots, w_N^1, w_N^2)\in \R_+^{2N}$ for which supply and demand for on-site and remote labour coincide i.e. 
\begin{equation}\label{equitele}
0=\frac{\partial \tpi_i}{\partial w^k} (w_i^1, w_i^2)+\int_X \frac{\partial \tR_\sigma}{\partial w_i^k}(x,w) \tmu_{\tw}(x) \mbox{d}x, \; i=1, \ldots, N, k=1,2.
\end{equation}
Defining for $\mu\in \PP(X)$, the strictly convex function, 
\begin{equation}\label{deftj}
\tJ_\mu(\tw):=\sum_{i=1}^N \tpi_i(w_i^1, w_i^2)+ \int_X  \tR_\sigma (x,\tw) \mbox{d} \mu(x), \; \forall \tw=(w_1^1, w_1^2, \ldots, w_N^1, w_N^2)\in \R_+^{2N}
\end{equation}
the system of $2N$ equations for equilibrium wages \pref{equitele} reads
\begin{equation}\label{fpeqrem}
\tw =\widetilde{\Phi}(\tw):=\argmin \tJ_{\tmu_{\tw}}.
\end{equation}
Under the assumptions of this paragraph, the existence and uniqueness results from section \ref{sec-exist} can be extended to the case of teleworking (details can be found in Appendix B):
\begin{itemize}
\item there exist equilibria i.e. solutions to \pref{equitele},

\item if in addition, the production functions $\tf_i$ are of class $C^2$ on $(0,+\infty)^2$ and 
\begin{equation}\label{strongconcaver}
D^2 \tf_i(l^1, l^2) \mbox{ is  negative definite, for every $(l^1,l^2)\in (0,+\infty)^2$},
\end{equation}
then \pref{equitele} admits a unique solution provided $\theta$ is small enough (an explicit bound can be found in Appendix B).
\end{itemize}

\section{Numerical simulations}\label{sec-num}

We approximate an equilibrium by solving equation \eqref{eqlabour} with the distribution $\mu=\mu_w$ given by \eqref{defmuw}, i.e. we are solving
\begin{equation}\label{sec:num:eqlabour}
\pi'_i(w_i)+ \int_X \frac{\partial R_\sigma}{\partial w_i}(x,w) \mu_w(x) \mbox{d}x=0, \; \forall i\in \{1, \ldots, N\}.
\end{equation}
This amounts to computing the zero of a function for which we apply a modification of the Powell hybrid method, as detailed below. 

\subsection{The scheme}
For convenience, we focus on the case when $X = [0,1]$ but the secheme can directly be extended to the case when $X$ is a bounded domain of $\R^d$, with $d>1$.  Let $X_h$ be a uniform grid on $X$ with step $h = 1/N_h$, $N_h\in\N$. Let $x_k$ denote a generic point in $X_h$; the values of $R_\sigma(\cdot,w)$, $\frac{\partial R_\sigma}{\partial w^i}(\cdot,w)$ and $\mu_w$ at $x_k$ will be respectively denoted by $R_k$, $DR_k^i$ and $\mu_k$ and computed by the explicit form of  $R_\sigma(\cdot,w)$, $D_wR_\sigma(\cdot,w)$ and $\mu_w$. We use the trapezoidal rule to estimate the value of the integrals. Therefore, we make the following approximation:
\begin{displaymath}
\int_X \frac{\partial R_\sigma}{\partial w_i}(x,w) \mu_w(x)\simeq h\left(\frac{DR_0\mu_0+DR_{N_h}\mu_{N_h}}{2}+\sum_{k=1}^{N_h-1}DR_k\mu_k \right).
\end{displaymath}
Note that when the dimension of $X$ is greater than one, the scheme can be adapted by considering another approximation of the integrals.

\subsection{The method}
Solving \eqref{sec:num:eqlabour} consists in finding a root of a vector function $F:\R^d\rightarrow \R^d$. We use a modification of the Powell hybrid method \cite{Powell1970, More1980} which consists in finding a minimizer of the function
\begin{displaymath}
	G= \sum_{i=1}^N F_i^2
\end{displaymath}
by constructing a sequence. For each iteration $n$, the next element $x_{n+1} = x_{n} + \delta_n$ is computed with the step $\delta_n$, which is a convex combination of the Gauss-Newton and the scaled gradient descent step. The Jacobian matrix of $F$ is computed with a forward-difference approximation for the initial point, and then updated with the Broyden's iteration, see \cite{Coleman1984} for more details. As explained in \cite{More1980}, this method "guarantees (under reasonable conditions) global convergence and a fast rate of convergence".


\subsection{Numerical results: some comparative statics}

We present two simulations whose goal is to analyze the influence of some  parameters on the equilibrium. They are carried out on a one-dimensional domain while  a third simulation on a two-dimensional domain will be presented in paragraph \ref{sim2d}. \\
The setting is as follows: we set $X=[-10,10]$, and we assume that there are three workplaces ($N = 3$) located at three different points in $X$. 
Let $y_i\in X $ be the location of the $i$th workplace. We assume that
\begin{displaymath}
	y_1 = -7,\quad y_2 = 0, \quad y_3 = 3,
 \end{displaymath}
 and that each workplace corresponds to a firm that seeks to maximize its profits; with a slight abuse of language,
 let $y_i$ be the name of the firm located at $y_i$. We assume that the transport cost to reach $y_i$ from $x\in X$ is given by
\begin{displaymath}
	c_i(x) = \frac{\abs{x-y_i}}{2},\quad\quad \forall i\in\{1,2,3\}.
\end{displaymath}
Note that we could have used any other continuous function on $X$ to model the transport costs without changing the scheme.

\subsubsection{Comparative statics as the preference parameter $\theta$ varies}
\paragraph{Definition of the model and the parameters}
We assume that the production of the firm $y_i$ is given by
\begin{displaymath}
	f_i(l) = A^{1-\beta} l^{\beta},\quad \forall l\in[0,+\infty).
 \end{displaymath}
The parameter $A$ can naturally be interpreted as the firm's productivity (which may depend on its capital for instance). The parameters used in Test 1 are listed in Table \ref{table:chap:spatial_numerics:table:test1} below.

\begin{center}
\begin{tabular}{|c|c|}
\hline Parameter & Value \\ \hline
$\beta$ & 0.7 \\ \hline
$A $ & $ 10^4$ \\ \hline
$w_0 $ & 12 \\ \hline
$\sigma $ & $0.1$ \\ \hline
\end{tabular}
\captionof{table}{\label{table:chap:spatial_numerics:table:test1}The parameters used in Test 1}
\end{center}

\paragraph{Numerical results}
In the following three figures, we compare the results obtained for different values of $\theta$.
In Figure \ref{fig:chap:spatial_numerics:sec:theta:distributions_test1}, we display the residential distribution of the people working at the different workplaces. Recall that these distributions are given by 
\begin{displaymath}
	X\ni x\mapsto \frac{\partial R_\sigma}{\partial w_i}(x,w)\mu(x),\qquad\forall i\in\{0,1,2,3\}.
\end{displaymath}
The lines 
(\tikz[baseline=-\the\dimexpr\fontdimen22\textfont2\relax,inner sep=0pt] \draw[orange2,line width=1pt](0,0) -- (5mm,0);), 
(\tikz[baseline=-\the\dimexpr\fontdimen22\textfont2\relax,inner sep=0pt] \draw[bleu2,line width=1pt](0,0) -- (5mm,0);) and
(\tikz[baseline=-\the\dimexpr\fontdimen22\textfont2\relax,inner sep=0pt] \draw[vert2,line width=1pt](0,0) -- (5mm,0);) 
are associated to the residences of the agents working at $y_1$, $y_2$ and $y_3$ respectively. The curve 
(\tikz[baseline=-\the\dimexpr\fontdimen22\textfont2\relax,inner sep=0pt] \draw[gris2,line width=1pt](0,0) -- (5mm,0);) 
corresponds to the residential distribution of \emph{independent workers} (i.e. those who stay home for the wage $w_0$). 
\\
In Figure \ref{fig:chap:spatial_numerics:sec:theta:salaries_masses_test1}, we display the wages and the number of workers in each workplace (we use the same color code as in Figure \ref{fig:chap:spatial_numerics:sec:theta:distributions_test1}).
Finally, the rental price as a function of the spatial variable $x$ is plotted in Figure \ref{fig:chap:spatial_numerics:sec:theta:rental_price_test1}.

\begin{center}
	\includegraphics{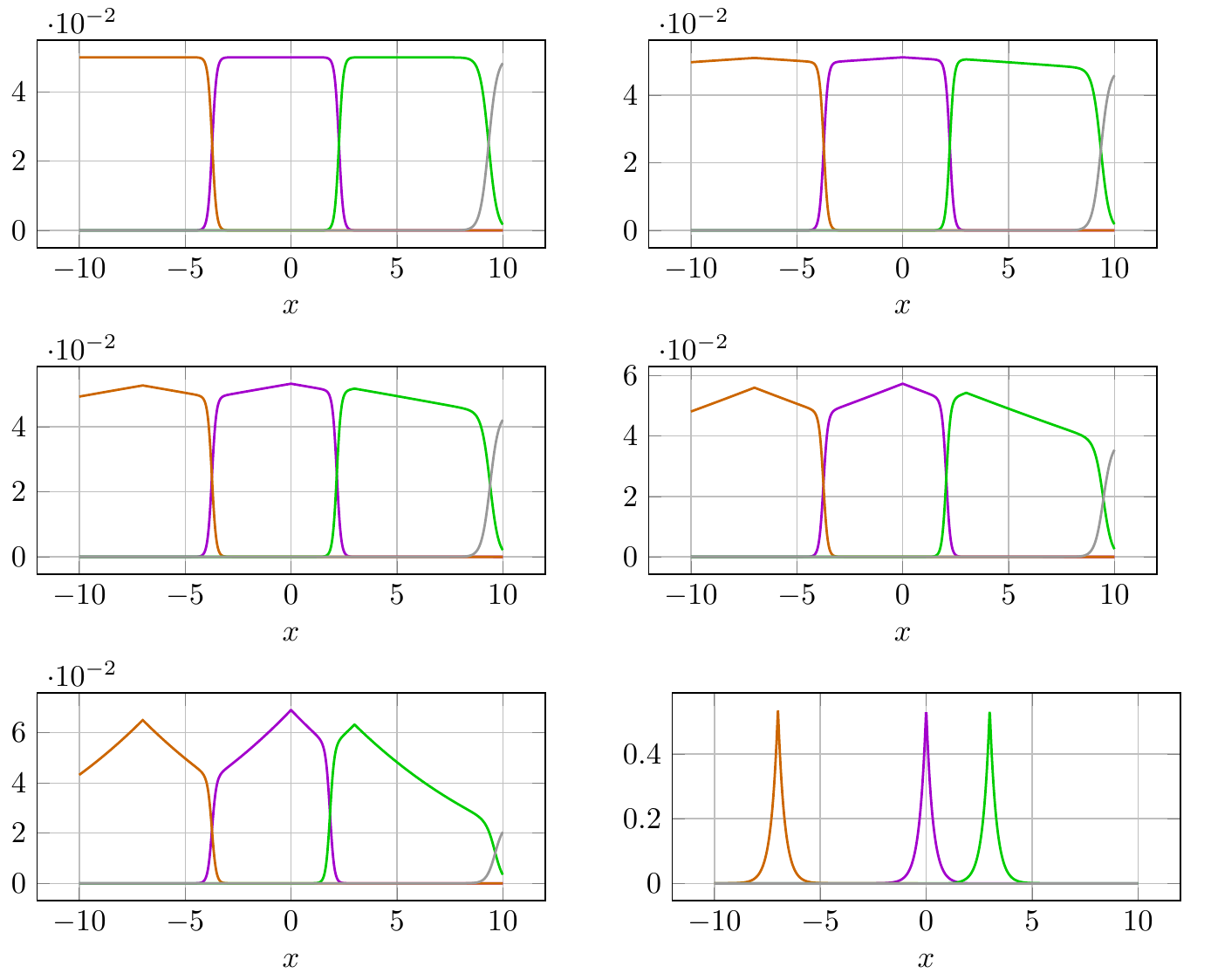}
\captionof{figure}{\label{fig:chap:spatial_numerics:sec:theta:distributions_test1}Residential distributions  of the people working at the different workplaces and of the independent workers, with $\theta = 0$ (top-left), $\theta = 0.2$ (top-right), $\theta = 0.4$ (middle-left), $\theta = 0.6$ (middle-right), $\theta = 0.8$ (bottom-left), $\theta = 0.99$ (bottom-right).}
\end{center}

Let us focus on the case when $\theta = 0$. The agents have only one source of utility, the surface that they rent.
Therefore, as it clearly appears on Figure \ref{fig:chap:spatial_numerics:sec:theta:distributions_test1},
they distribute themselves uniformly on $X$ (the supply of space is constant). This gives to $y_3$ a positional advantage; indeed
the basin of attraction of $y_3$ is larger than those of $y_1$ and $y_2$.
Therefore, the supply of labour at $y_3$ is larger than at $y_1$ or $y_2$. We see that different locations lead to differences in labour supply.\newline 
Due to the advantage that $y_3$ has, it attracts more workers and may pay them less than $y_1$ or $y_2$. Similarly, $y_1$ has a positional advantage with respect to $y_2$. This explains why $w_3<w_1<w_2$, even though $y_3$ attracts more workers than $y_1$, which attracts more workers than $y_2$, see Figure \ref{fig:chap:spatial_numerics:sec:theta:salaries_masses_test1}.

\begin{center}
\includegraphics{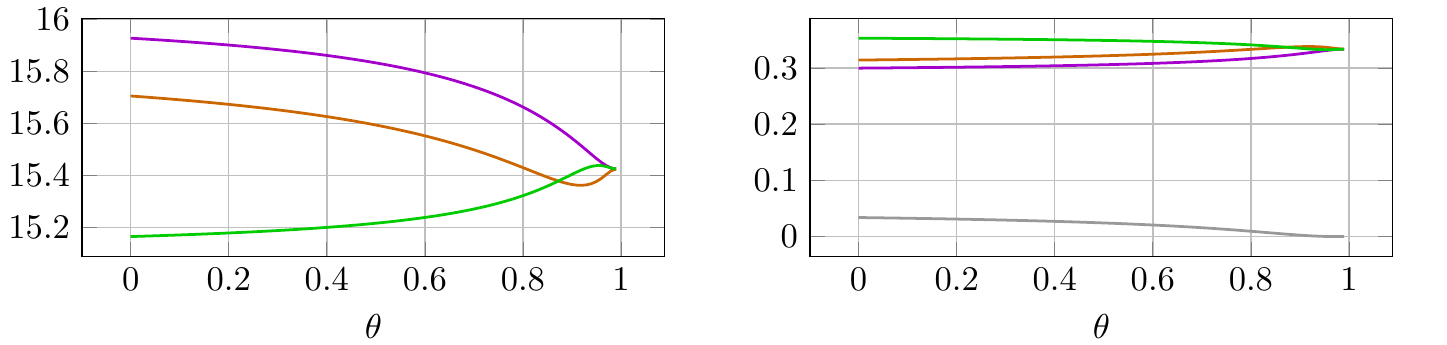}
\captionof{figure}{\label{fig:chap:spatial_numerics:sec:theta:salaries_masses_test1}Wages versus $\theta$ (on the left) and the number of workers versus $\theta$ (on the right).}
\end{center}

\begin{center}
\includegraphics{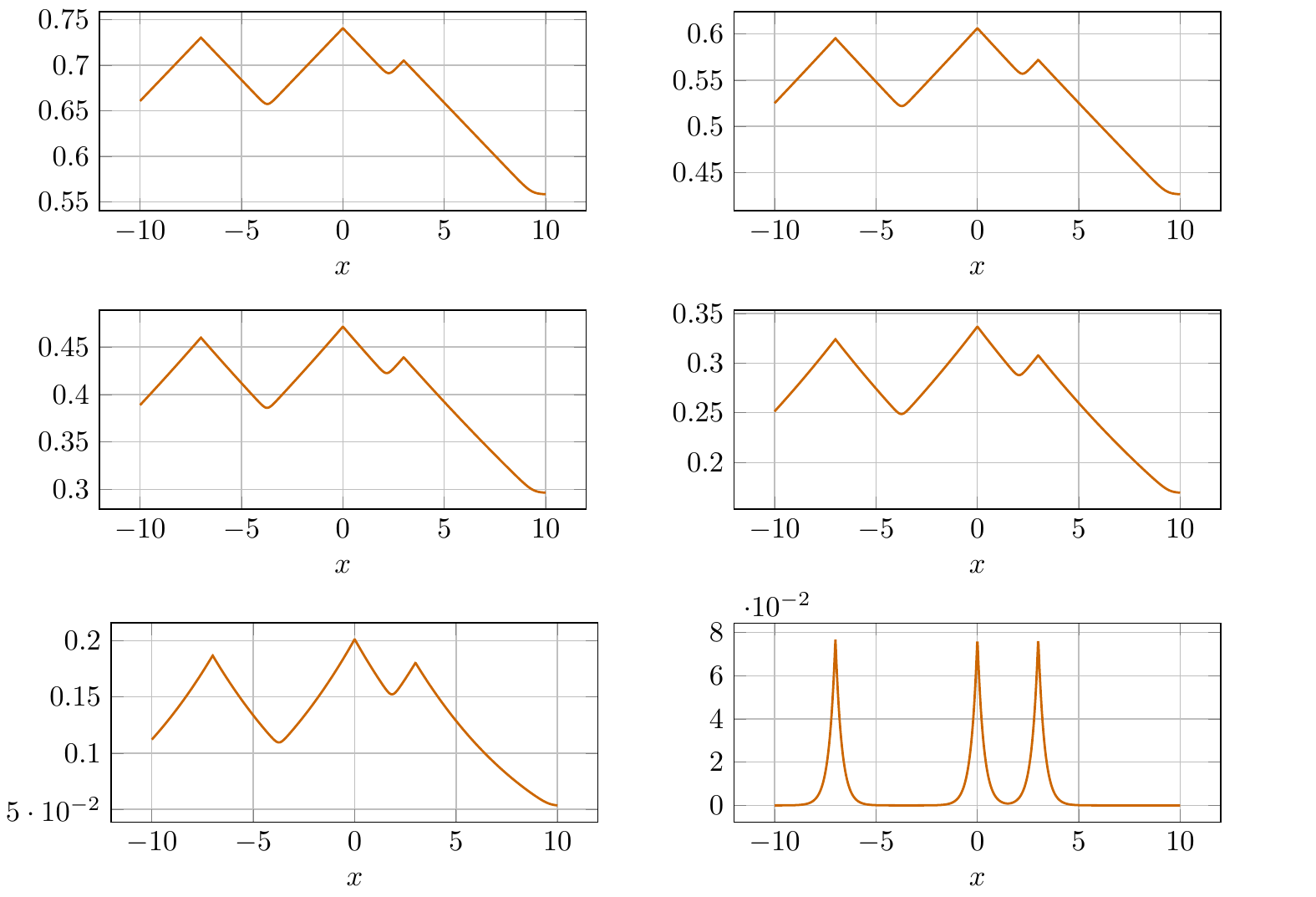}
\captionof{figure}{\label{fig:chap:spatial_numerics:sec:theta:rental_price_test1}Rental price versus $x$: comparison for $\theta = 0$ (top-left), $\theta = 0.2$ (top-right), $\theta = 0.4$ (middle-left), $\theta = 0.6$ (middle-right), $\theta = 0.8$ (bottom-left), $\theta = 1$ (bottom-right).}
\end{center}

As $\theta$ increases, the relative importance of the surface of the house in the utility function of the agents decreases.
As a consequence, the demand for surface decreases and so does the rental price, see Figure \ref{fig:chap:spatial_numerics:sec:theta:rental_price_test1}. On the other hand, the relative importance of consumption in the utility function of the agents increases. Therefore, the tendency is to have a concentration of housing close to the workplaces, because the agents choose to reduce their transport costs in order to increase their consumption, see Figure \ref{fig:chap:spatial_numerics:sec:theta:distributions_test1}. In Figure \ref{fig:chap:spatial_numerics:sec:theta:salaries_masses_test1}, we observe that, when $\theta$ varies from $0$ to $0.8$,  wages at $y_1$ and $y_2$ decrease, while  wages at $y_3$ increase. We may give two reasons for that. First, the concentration of houses near the workplaces tends to reduce the competition on the labour market between $y_1$ and $y_2$. Second, the number of workers living in the interval $[5,10]$ decreases, so that the positional advantage of $y_3$ decreases. 

For $\theta> 0.8$, the concentration phenomenon progressively isolates $y_1$ from $y_2$ and $y_3$. Therefore, $y_1$ enjoys a positional advantage similar to the one that $y_3$ had when $\theta$ was small. This allows $y_1$ to decrease the level of wage. On the other hand, this extra competition pushes $y_3$ to increase its wage.

Finally, when $\theta$ is close to $1$, the size of the basins of attraction of the different workplaces becomes small, so that they are isolated of each other. 
When $\theta=0.99$, the wages and the number of agents in each workplace is almost the same.

\subsubsection{Comparative statics as the productivity of teleworkers varies}
\paragraph{Description of the model and the parameters}
We use the teleworking model described in section \ref{sec-tele}. Therefore, we approximate a solution of \eqref{equitele}, the analogue equation of \eqref{sec:num:eqlabour}, by using the same method introduced in the beginning of this section.
We assume that the production of $y_i$ is given by 
\begin{displaymath}
	 \tf_i(l,s) = A^{1-\beta} (l^{\alpha} + Bs^{\alpha})^\frac{\beta}{\alpha} ,\quad \forall (l,s)\in[0,+\infty)^2.
 \end{displaymath}
The parameter $B$ is related to the productivity of the teleworkers. We are going to let $B$ vary from 0 to 1.

The parameters used in Test 2 are listed in Table \ref{table:chap:spatial_numerics:table:test3} below.

\begin{center}
\begin{tabular}{|c|c|}
\hline Parameter & Value \\ \hline
$\beta$ & 0.7 \\ \hline
$\alpha$ & 0.9 \\ \hline
$A $ & $10^4$ \\ \hline
$w_0 $ & 12 \\ \hline
$\sigma $ & $0.1$ \\ \hline
$\theta $ & $0.7$ \\ \hline
\end{tabular}
\captionof{table}{\label{table:chap:spatial_numerics:table:test3}The parameters used in Test 2.}
\end{center}

\paragraph{Numerical results}

In the three figures below, we compare the results obtained for different values of $B$.
In Figure \ref{fig:chap:spatial_numerics:sec:telecommuting:distribution},
we display the residential distributions for the workers of  the different workplaces.
The lines 
(\tikz[baseline=-\the\dimexpr\fontdimen22\textfont2\relax,inner sep=0pt] \draw[orange3,line width=1pt](0,0) -- (5mm,0);), 
(\tikz[baseline=-\the\dimexpr\fontdimen22\textfont2\relax,inner sep=0pt] \draw[bleu3,line width=1pt](0,0) -- (5mm,0);) and
(\tikz[baseline=-\the\dimexpr\fontdimen22\textfont2\relax,inner sep=0pt] \draw[vert3,line width=1pt](0,0) -- (5mm,0);) are associated to the residences of the commuters working at $y_1$, $y_2$ and $y_3$ respectively. The curves 
(\tikz[baseline=-\the\dimexpr\fontdimen22\textfont2\relax,inner sep=0pt] \draw[orange1,line width=1pt, densely dashed](0,0) -- (5mm,0);), 
 (\tikz[baseline=-\the\dimexpr\fontdimen22\textfont2\relax,inner sep=0pt] \draw[bleu1,line width=1pt, densely dashed](0,0) -- (5mm,0);) and 
(\tikz[baseline=-\the\dimexpr\fontdimen22\textfont2\relax,inner sep=0pt] \draw[vert1,line width=1pt, densely dashed](0,0) -- (5mm,0);) 
are respectively associated to the residences of the teleworkers working for $y_1$, $y_2$ and $y_3$. The curve 
(\tikz[baseline=-\the\dimexpr\fontdimen22\textfont2\relax,inner sep=0pt] \draw[gris2,line width=1pt](0,0) -- (5mm,0);) 
corresponds to the residences distribution of independent workers.
In Figure \ref{fig:chap:spatial_numerics:sec:telecommuting:salaries_masses}, we plot the wages and the number of workers in each workplace versus $B$ (we use the same color code). In Figure \ref{fig:chap:spatial_numerics:sec:telecommuting:rental_price}, we plot the rental price versus $x$.

\begin{center}
\includegraphics[scale=1]{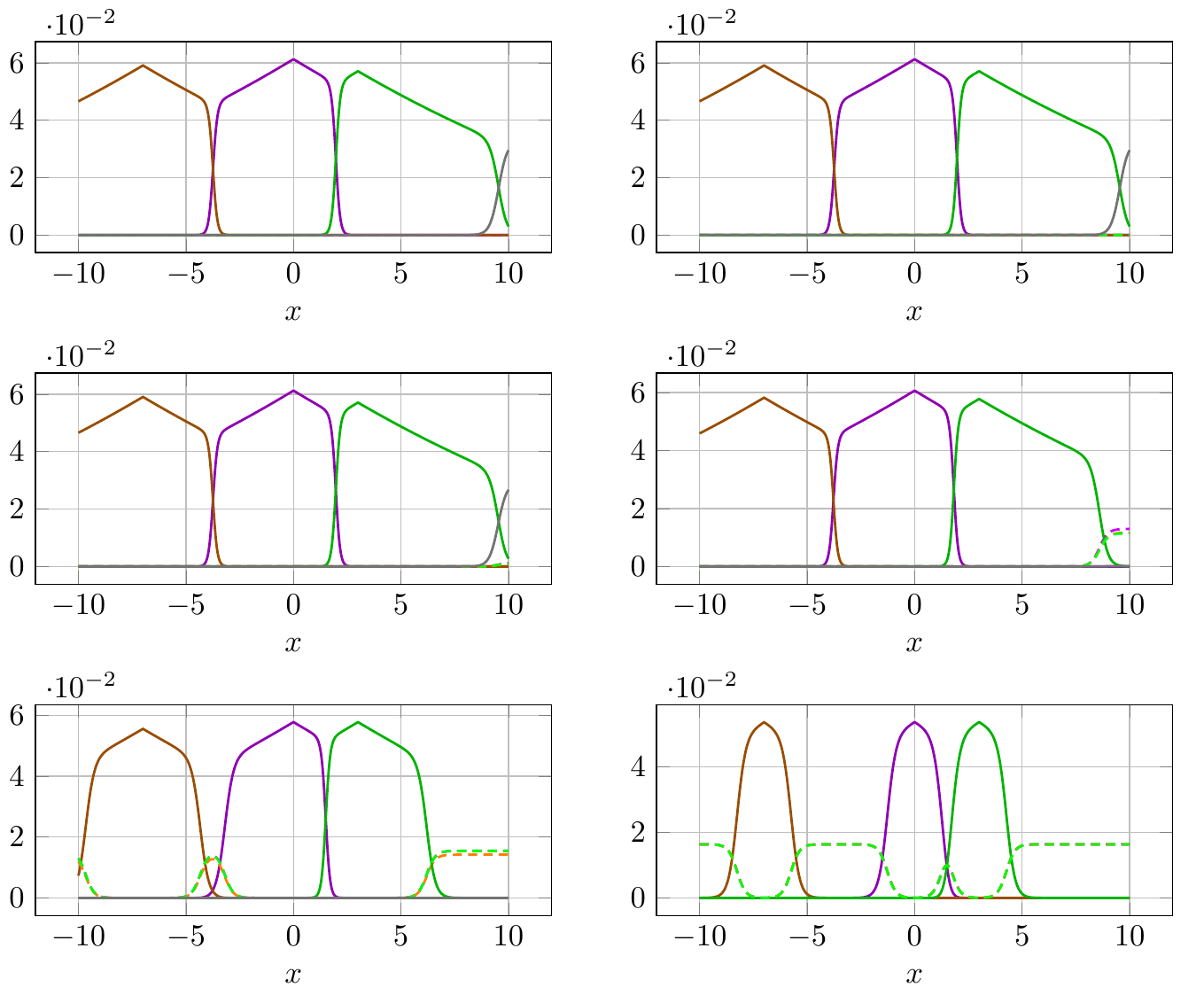}
\captionof{figure}{\label{fig:chap:spatial_numerics:sec:telecommuting:distribution}Residential distributions of the people working for the different workplaces and of the independent workers, for $B = 0$ (top-left), $B = 0.2$ (top-right), $B = 0.4$ (middle-left), $B = 0.6$ (middle-right), $B = 0.8$ (bottom-left), $B = 1$ (bottom-right).}
\end{center}
On Figure \ref{fig:chap:spatial_numerics:sec:telecommuting:salaries_masses}, we observe a phenomenon similar to what happened in the first simulation related to the sensitivity with respect to $\theta$. Indeed, for $B= 0$, the wages depend on the workplace. Then, as the parameter $B$ increases, people choose to telecommute when
their transport costs are high, see Figure \ref{fig:chap:spatial_numerics:sec:telecommuting:distribution}. As a result, $y_3$ loses some of its positional advantage, since $y_1$ and $y_2$ may hire teleworkers to the right of $y_3$ because the latter do not incur transportation costs.
Progressively, as in Test 1, $y_3$ loses its positional advantage whereas $y_1$ becomes more attractive because its basin of attraction becomes isolated from those of the other two workplaces. As in Test 1,  commuters' residential distributions tend to concentrate in smaller and smaller areas, so that when $B$ is large, no firm has a geographical advantage on the others. Therefore, when $B = 1$, the wages and the number of workers in each workplace are the same, see Figure \ref{fig:chap:spatial_numerics:sec:telecommuting:salaries_masses}. Note that for small values of $\sigma$, the wages of the teleworkers are the same in each workplace, because 
the firms compete for hiring the teleworkers and because the latter do not incur transportation costs. This is what we observe here for $\sigma = 0.1$ where the differences when $B\ge 0.2$ are of the order of $10^{-4}$. 
 The differences observed for $B<0.2$ are due to numerical approximations. 
\begin{center}
\includegraphics{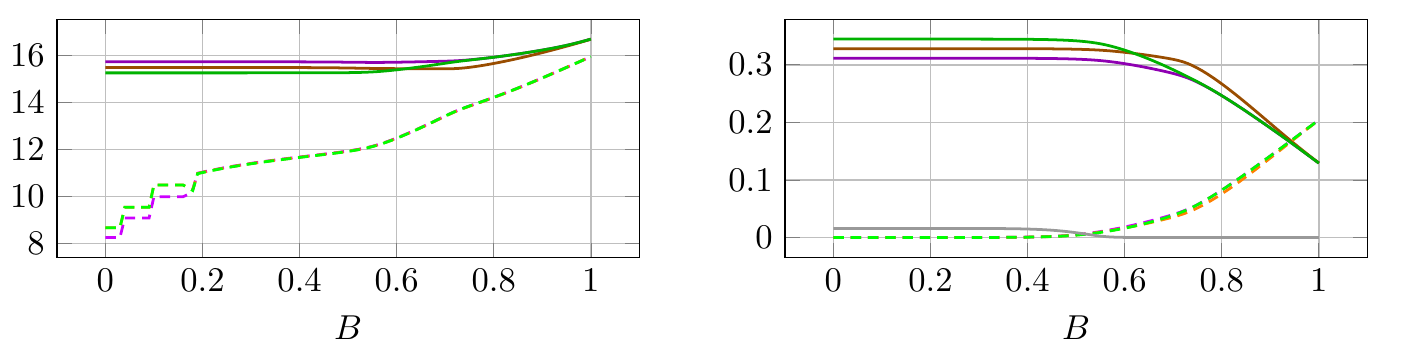}
\captionof{figure}{\label{fig:chap:spatial_numerics:sec:telecommuting:salaries_masses}Wages versus $B$ (on the left) and the number of workers versus $B$ (on the right).}
\end{center}

We also observe on Figure \ref{fig:chap:spatial_numerics:sec:telecommuting:salaries_masses} that commuters are more paid than teleworkers, even if $B = 1$. This comes from the fact that commuters have transport costs. Besides, commuters live in areas where the rental price is higher. Moreover, we observe that when $B$ increases, the wages of both commuters and teleworkers increase. There are two reasons which explain this phenomenon. First, the demand for teleworkers increases with their productivity, and so does their wage. Second, the form of the production function and the fact that $\gamma\in (0,1)$ imply that hiring both types of workers is more productive than hiring just one type.
\begin{center}
\includegraphics{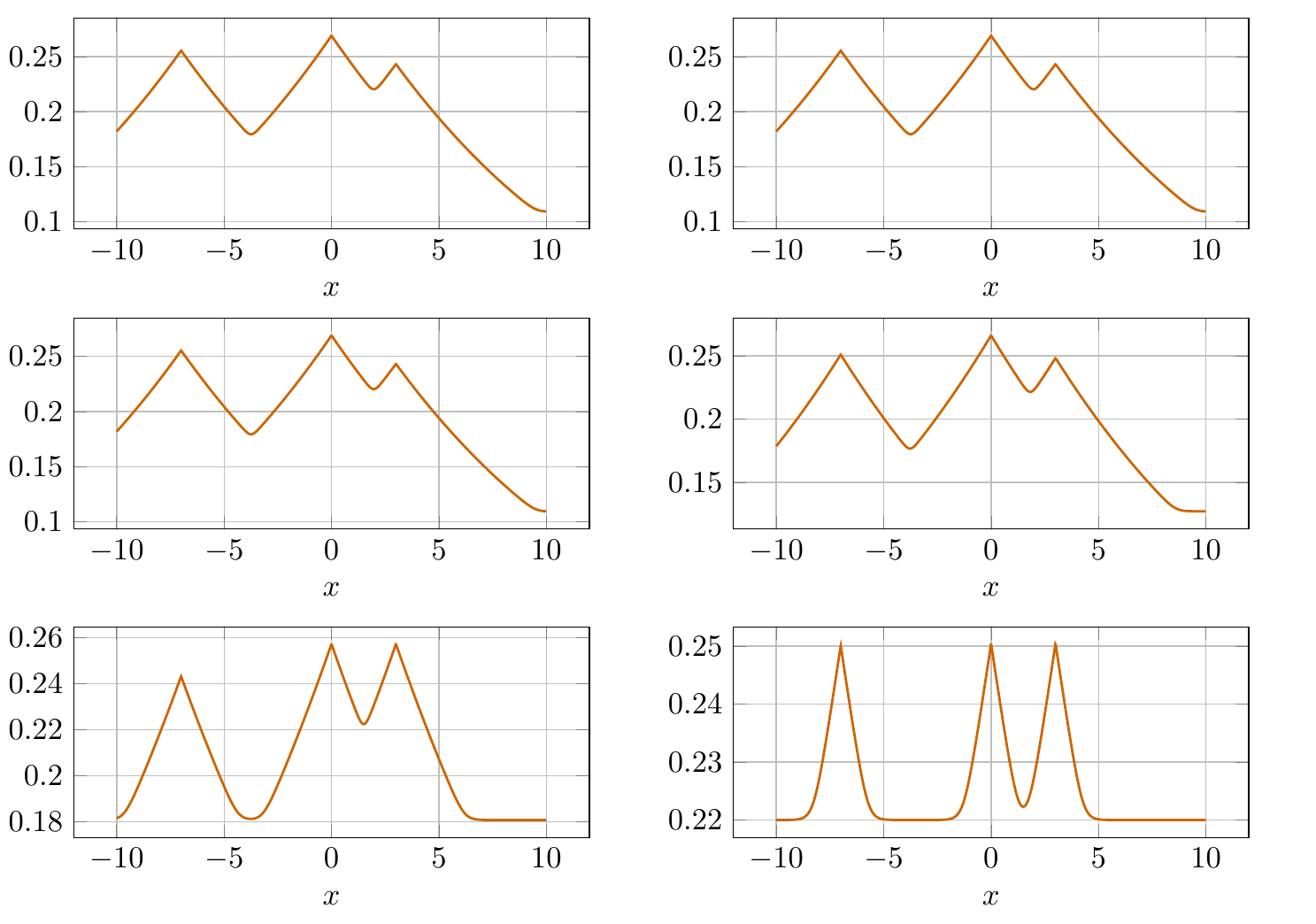}
\captionof{figure}{\label{fig:chap:spatial_numerics:sec:telecommuting:rental_price}Rental price versus $x$, for $B = 0$ (top-left), $B = 0.2$ (top-right), $B = 0.4$ (middle-left), $B = 0.6$ (middle-right), $B = 0.8$ (bottom-left), $B = 1$ (bottom-right).}
\end{center}

On Figure \ref{fig:chap:spatial_numerics:sec:telecommuting:rental_price}, we see that the rental price increases firstly in the area with the highest transport costs.
Then, due to the fact that the geographical position of workplaces is no longer important, the rental price in $y_1$, $y_2$ and $y_3$ is the same. Finally, observe that the rental price in the area occupied by teleworkers is constant. 

In the following paragraph, we present simulations on a two-dimensional domain using the same numerical method.

\subsubsection{A teleworking model in a two-dimensional domain}\label{sim2d}
We aim at extending the teleworking simulation of the previous paragraph to a two-dimensional domain. We obtain similar results with a computation time which increases significantly due to the computational cost of  the integrals in \eqref{equitele} and \eqref{tilmuw} (by a trapezoidal rule adapted to the two-dimensional case). 
\paragraph{Description of the model and the parameters}
We assume that $X = [-10,10]^2$, that the workplaces are located in 
\begin{displaymath}
	y_1 = (-7,7),\quad y_2 = (0,0),\quad y_3 = (3,-3)
\end{displaymath}
and that the production of $y_i$ is given by 
\begin{displaymath}
	\tf_i(l,s) = A^{1-\beta} (l^{\alpha} + Bs^{\alpha})^\frac{\beta}{\alpha} ,\quad \forall (l,s) \in[0,+\infty)^2.
 \end{displaymath}
  In this setting, the transport cost to reach the $i$th workplace, for  commuters living at $x\in X$, is given by 
 \begin{displaymath}
		c_i(x) = \frac{\norm{x - y_i}_2}{2}, 
 \end{displaymath}
 while it is $0$ for  teleworkers.

 As before, we are going to let $B$ vary from 0 to 1.
The parameters used in Test 3 are the same as in Test 2 and are listed in Table \ref{table:chap:spatial_numerics:table:test4} below.

\begin{center}
\begin{tabular}{|c|c|}
\hline Parameter & Value \\ \hline
$\beta$ & 0.7 \\ \hline
$\alpha$ & 0.9 \\ \hline
$A $ & $10^4$ \\ \hline
$w_0 $ & 12 \\ \hline
$\sigma $ & $0.1$ \\ \hline
$\theta $ & $0.7$ \\ \hline
\end{tabular}
\captionof{table}{\label{table:chap:spatial_numerics:table:test4}The parameters used in Test 3.}
\end{center}

\paragraph{Numerical results}

In the figures below, we display the residential distributions of commuters, working at the different workplaces, for different values of $B$.
The color 
(\tikz[baseline=-\the\dimexpr\fontdimen22\textfont2\relax,inner sep=0pt] \draw[orange3,line width=1pt](0,0) -- (5mm,0);), 
(\tikz[baseline=-\the\dimexpr\fontdimen22\textfont2\relax,inner sep=0pt] \draw[bleu3,line width=1pt](0,0) -- (5mm,0);) and
(\tikz[baseline=-\the\dimexpr\fontdimen22\textfont2\relax,inner sep=0pt] \draw[vert3,line width=1pt](0,0) -- (5mm,0);) are associated to the residences of the commuters working at $y_1$, $y_2$ and $y_3$ respectively.
 The color 
(\tikz[baseline=-\the\dimexpr\fontdimen22\textfont2\relax,inner sep=0pt] \draw[bleugris,line width=1pt](0,0) -- (5mm,0);)
is associated to the teleworkers' residences, and 
(\tikz[baseline=-\the\dimexpr\fontdimen22\textfont2\relax,inner sep=0pt] \draw[gris2,line width=1pt](0,0) -- (5mm,0);) 
corresponds to the housing distribution of independent workers.

The analysis is the same as in the latter simulation. When $B$ increases, telecommuting develops in areas with high transport costs, see Figure \ref{fig:chap:spatial_numerics:sec:telecommuting:2D_distribution}. The distributions displayed on the bottom-left of Figure \ref{fig:chap:spatial_numerics:sec:telecommuting:2D_distribution} highlight the same phenomenon of concentration identified in Test 1 and 2, namely the houses of commuters are located in smaller and smaller areas.


\begin{center}
	\includegraphics[scale = 0.85]{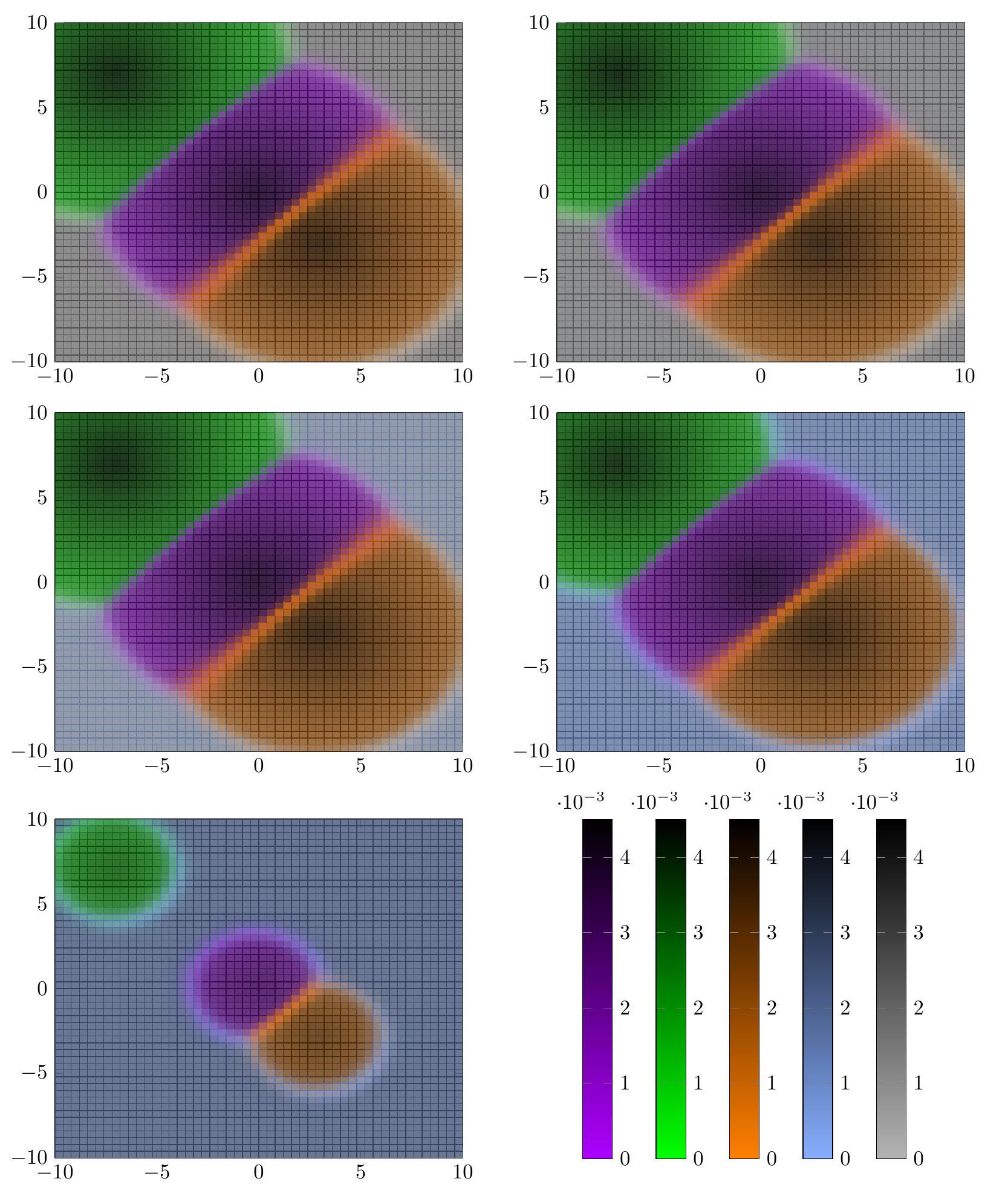}$\;\;\;\;\;\;\;\;\;\;\;\;$
	\captionof{figure}{\label{fig:chap:spatial_numerics:sec:telecommuting:2D_distribution}Distributions of the houses of commuters working for the different workplaces, of teleworkers and of the independent workers, with $B=0$ (top-left), $B = 0.33$ (top-right), $B=0.5$ (middle-left), $B=0.66$ (middle-right), $B=1$ (bottom-left).}
\end{center}

\section*{Appendix}

\subsection*{A. Softmax as the expectation of a max}

Let us first recall that a random variable $\eps$ has a centered standard Gumbel distribution if its cdf has the following double exponential form:
\[\Pro(\eps \le t)=\exp(-\exp(-t-\gamma)), \; \forall t\in \R,   \mbox{ with } \gamma:=-\int_0^{+\infty} \log(s) \exp(-s) \mbox{d}s.\]
Consider now $\eps_0, \cdots, \eps_N$, $N+1$ i.i.d.  distributed with a centered standard Gumbel and for $\beta=\beta_0, \ldots, \beta_N \in \R^{N+1}$ set 
\[V(\beta):=\EE(\max_{i=0, \ldots, N} (\beta_i + \eps_i))\]
since 
\[\Pro( (\max_{i=0, \ldots, N} (\beta_i + \eps_i) \leq t)=\exp \Big(-\Lambda(\beta) \exp(-t-\gamma)\Big), \mbox{ with } \Lambda(\beta)=\sum_{i=0}^N e^{\beta_i}\]
we have
\[\begin{split}
V(\beta )&=\int_{\R}  t  \exp \Big(- \exp(-t +\log\Lambda(\beta)-\gamma)\Big) \exp(-t +\log\Lambda(\beta)-\gamma ) \mbox{d}t \\
& =  \int_{\R}  (s+ \log(\Lambda(\beta)) \exp \Big(- \exp(-s -\gamma)\Big) \exp(-s -\gamma ) \mbox{d}s \\
&= \EE( \eps + \log(\Lambda(\beta)))= \log(\Lambda(\beta))=\log\Big(\sum_{i=0}^N e^{\beta_i} \Big).
\end{split}\]
Recalling the expression \pref{defrsig} for $R_\sigma$, we thus have
\[\EE(\max_{i=0, \ldots, N} (w_i-c_i(x)+\sigma \eps_i))= \sigma \log\Big(\sum_{i=0}^N e^{\frac{w_i-c_i(x)}{\sigma}} \Big) = R_\sigma(x,w)\]
which shows \pref{softmaxmax}.  Moreover, it follows from Lebesgue's dominated convergence theorem that $V$ is differentiable with
\[\frac{\partial V}{\partial \beta_i} (\beta)= \Pro( \beta_i+\eps_i \geq \beta_j +\eps_j, \; \forall j=0, \ldots, N)=\frac{e^{\beta_i}}{\sum_{j=0}^N e^{\beta_j}}\]
which shows formula \pref{probaix}.

\subsection*{B. Existence and uniqueness of equilibria in the teleworking model}
  
  \subsubsection*{Existence}
  
  Under the assumptions of section \ref{sec-tele}, we claim that there exists equilibrium wages i.e. a vector $\tw\in \R_+^{2N}$ solving the fixed-point equation \pref{fpeqrem}. To see this, we first argue as in Lemma \ref{variat} and find that for every $\mu \in \PP(X)$, the functional $\tJ_\mu$ defined in \pref{deftj} admits a unique minimizer $\tw=(w_i^k)_{i=1, \ldots,N, k=1,2}$ which satisfies 
  \[\max_{i,k} w_i^k + \sum_{i=1}^N \tpi_i(w_i^1, w_i^2) \leq \ow:= 2M +  \sum_{i=1}^N \tpi_i(w_0, w_0) +\sigma \log(2N+1)+ w_0\]
  where $M:=\max_i \Vert c_i\Vert_{\infty}$. Thanks to the nonnegativity of $\tpi_i$ and  \pref{tpcoerc0}, we find that the minimizer of $\tJ_\mu$ belongs to $[\uw, \ow]^{2N}$ where $0<\uw\leq \ow$ are bounds that do not depend on $\mu$. The conclusion of Lemma \ref{variat} still holds for $\tJ_\mu$ so that the existence of an equilibrium follows from Brouwer's theorem exactly as in the proof of theorem \ref{existeqsig}. 
  
  \subsubsection*{Uniqueness}
  
  Let us now further assume that the production functions $\tf_i$ are of class $C^2$ on $(0,+\infty)^2$ and that \pref{strongconcaver} holds. Since $-\nabla \tf_i$ is the inverse of $\nabla \tpi_i$, this implies that for every $(w^1, w^2)\in [\uw, \ow]^2$, the $2\times 2$ matrix $D^2 \tpi_i(w^1, w^2)$ is positive definite so that its smallest eigenvalue $\lambda_{\min}(D^2 \tpi_i(w^1, w^2))$ is positive (and depends on $(w^1, w^2)\in [\uw, \ow]^2$ in  a continuous way). Setting $\alpha:=\frac{\theta}{1-\theta}$, and 
  \[\tmu(x, \tw, \alpha):=\frac{ \tR_\sigma(x, \tw)^{\alpha}} {\int_X \tR_\sigma(y, \tw)^{\alpha} \mbox{d} y }, \; \forall (x, \tw, \alpha) \in X\times \R_+^{2N} \times \R_+\]
 we write the system of equilibrium conditions as the system of $2N$ equations in the $2N$ unknown $\tw:=(w_1^1, w_1^2, \ldots w_N^1, w_N^2)$ 
 \[\tG(\tw, \alpha)=0, \mbox{ where } \tG_i^k(\tw, \alpha)=\frac{\partial \tpi_i}{\partial w_i^k}(w_i^1, w_i^2) + \int_X \frac{\partial \tR_{\sigma}}{\partial w_i^k} \tmu(x, \tw, \alpha) \mbox{d}x  \]
 so that the Jacobian matrix (with respect to $\tw$) $\tA$ of $\tG$ reads
 \[\begin{split}
 \tA=&\left(\begin{array}{ccccc} 
 D^2 \tpi_1(w_1^1, w_1^2) & O & O  & \cdots & O \\
 O &  D^2 \tpi_2(w_2^1, w_2^2) & O & \cdots  & O\\
 \cdot & \cdot  & \cdot & \cdots  &  \cdot\\
 \cdot & \cdot  & \cdot & \cdots  &  \cdot\\
 O &   O  & O & \cdots   & D^2 \tpi_N(w_N^1, w_N^2) \\
 \end{array}\right)\\
 &+ \int_X D^2_{\tw \tw} \tR_{\sigma} \tmu \\
& + \int_X \nabla_{\tw} \tR_\sigma  \nabla_{\tw} \tmu^{\top}
 \end{split}\]
  where the matrix in the first line is written in $2\times 2$ block-diagonal form  (and $O$ denotes the zero $2\times 2$ matrix) and the matrix on the second line is positive definite. Since all wages $w_i^k$ belong to $[\uw, \ow] \subset (0,+\infty)$, setting
  \[\nu:=\min_{i=1, \ldots, N} \min_{(w^1, w^2) \in [\uw, \ow]^2} \lambda_{\min}(D^2 \tpi_i(w^1, w^2)) >0,\]
  we have, for every $\txi\in \R^{2N}\setminus \{0\}$
  \[\tA \txi \cdot \txi > \Big( \nu- \int_X  \vert  \nabla_{\tw} \tR_\sigma \vert \;     \vert   \nabla_{\tw} \tmu   \vert      \Big) \vert \txi\vert^2. \]
  Obviously $\vert  \nabla_{\tw} \tR_\sigma \vert\leq \sqrt{2N}$ and assuming that $\alpha \leq 1$, arguing as in the proof of  theorem \ref{thuniq}, we get 
  \[  \int_X \vert   \nabla_{\tw} \tmu   \vert \leq \frac{  \alpha}{w_0} \sqrt{2N}\]
 hence, we deduce that $\tA$ is invertible as soon as
  \[ \alpha \leq   \frac{  w_0 \nu}{ 2N} \]
 For such a choice of $\alpha$, uniqueness of the equilibrium can be shown exactly as in the proof of  theorem \ref{thuniq}.

{\bf Acknowledgments:}  All the authors  were partially supported by the ANR  (Agence Nationale de la Recherche) through MFG project ANR-16-CE40-0015-01.
Y.A. and Q.P. acknowledge partial support from the Chair Finance and Sustainable Development and the FiME Lab  (Institut Europlace de Finance). The paper was completed when Y.A spent a semester at INRIA matherials. G.C. acknowledges the support of the Lagrange Mathematics and Computing Research Center.

\bibliographystyle{plain}

\bibliography{bibli}

\end{document}